\definecolor{webgreen}{rgb}{0,.5,0}
\definecolor{webbrown}{rgb}{.6,0,0}
\newcommand{\Z}{\mathbb{Z}}
\newcommand{\Q}{\mathbb{Q}}
\newcommand{\sign}{\mathrm{sign}}
\newcommand{\seqnum}[1]{\href{http://oeis.org/#1}{\underline{#1}}}
\begin{document}

\theoremstyle{plain}
\newtheorem{theorem}{Theorem}
\newtheorem{corollary}{Corollary}
\newtheorem{lemma}{Lemma}
\newtheorem{proposition}{Proposition}

\theoremstyle{definition}
\newtheorem{definition}{Definition}
\newtheorem{example}{Example}
\newtheorem{conjecture}{Conjecture}
\newtheorem{problem}{Problem}

\theoremstyle{remark}
\newtheorem{remark}{Remark}

\begin{center}
\vskip 1cm{\LARGE\bf
On arithmetic progressions in\\
\vskip .12in
Lucas sequences}
\vskip 1cm
\large
Lajos Hajdu \footnote{The author was supported by the NKFIH grant 115479.}\\
Institute of Mathematics,\\
University of Debrecen, \\
P.O. Box 400,\\
H-4002 Debrecen,\\
Hungary\\
\href{mailto:hajdul@science.unideb.hu}{\tt hajdul@science.unideb.hu}\\

\ \\
M\'arton Szikszai\footnote{The author was supported by the Hungarian Academy of Sciences.}\\
Institute of Mathematics, University of Debrecen\\
and\\
MTA-DE Research Group Equations Functions and Curves, Hungarian Academy of Sciences and University of Debrecen\\
P.O. Box 400,\\
H-4002 Debrecen,\\
Hungary\\
\href{mailto:hajdul@science.unideb.hu}{\tt hajdul@science.unideb.hu}\\

\ \\
Volker Ziegler\footnote{The author was supported by the Austrian Research Foundation (FWF), Project P24801-N26.}\\
Institute of Mathematics,\\
University of Salzburg,\\
Hellbrunnerstrasse 34/I,\\
A-5020 Salzburg,\\
Austria\\
\href{mailto:volker.ziegler@sbg.ac.at}{\tt volker.ziegler@sbg.ac.at}\\
\end{center}

\begin{abstract}
In this paper, we consider arithmetic progressions contained in Lucas sequences
of the first and second kind. We prove that for almost all Lucas sequences, there are only
finitely many arithmetic three term progressions and their number can be effectively bounded.
We also show that there are only a few Lucas sequences which contain infinitely many arithmetic three term progressions
and one can explicitly list both the sequences and the progressions in them. A more precise
statement is given for sequences with dominant root.
\end{abstract}

\section{Introduction}
The study of additive structures in certain sets of integers has a long history.
In particular, the description of arithmetic progressions has attracted the attention of many researchers
and is still an actively studied topic with a
vast literature. Without trying to be exhaustive and going into details we only
mention a few interesting directions.

Bremner \cite{bremner} found infinitely many elliptic curves such that among the
first coordinates of rational points one can find an eight-term arithmetic
progression. Campbell \cite{campbell}, modifying ideas of Bremner, gave the
first example of a progression of length twelve which was later improved to
fourteen by MacLeod \cite{macleod}. For more details on the various
generalizations to other families of curves and recent progress we refer to the
paper of Ciss and Moody \cite{cissmoody} and the references given there.

The study of arithmetic progressions in solution sets of Pellian equations is another interesting direction.
Dujella, Peth\H{o} and Tadi\'c \cite{dujellapethotadic} showed that for every four term arithmetic progression $(y_1,y_2,y_3,y_4)\neq (0,1,2,3)$ there exist infinitely many pairs $(d,m)$ such that the equation $x^2-dy^2=m$ admits solutions with $y=y_i$ for $i=1,2,3,4$.
On the other hand, Peth\H{o} and Ziegler \cite{pethoziegler} proved that for five-term
arithmetic progressions only a finite number of such equations are possible. For
further progress we refer to the papers of Aguirre, Dujella and Peral
\cite{aguirredujellaperal} and Gonz\'alez-Jim\'enez \cite{gonzalezjimenez}.

In this paper, we connect to the field by considering arithmetic progressions in
Lucas sequences. Let $A$ and $B$ be non-zero integers such that the roots
$\alpha$ and $\beta$ of the polynomial
\begin{equation}
\label{charpoly}
x^2-Ax-B
\end{equation}
satisfy that $\alpha/\beta$ is not a root of unity. Define the sequences
$u=(u_n)_{n=0}^\infty$ and $v=(v_n)_{n=0}^\infty$ via the binary recurrence
relations
\begin{equation}
\label{recurrence}
u_{n+2}=Au_{n+1}+Bu_n\qquad \mathrm{and} \qquad v_{n+2}=Av_{n+1}+Bv_n \qquad (n\geq 0)
\end{equation}
with initial values $u_0=0,\ u_1=1$ and $v_0=2,\ v_1=A$. We call $u$ and $v$ the
Lucas sequences of first and second kind corresponding to the pair $(A,B)$,
respectively.

\begin{remark}
The classical definition of $u$ and $v$ requires $\gcd(A,B)=1$ which is needed for
certain $p$-adic properties to hold. Hence our definition is more general.
\end{remark}

\begin{remark}
The assumption on the roots of the companion polynomial \eqref{charpoly}
coincides with the non-degeneracy property. The study of all linear recurrences
can effectively be reduced to such sequences, see
\cite{everestpoortenshparlinskiward}.
\end{remark}

The investigation of arithmetic progressions in linear recurrences is not a completely new topic.
Pint\'er and Ziegler \cite{pinterziegler} gave a criterion
for linear recurrences of general order to contain infinitely many three term
arithmetic progressions. Note that the study of three term arithmetic
progressions is the first non-trivial problem and is the most general.
Further, they proved that the
sequence of Fibonacci and Jacobsthal numbers are the only increasing Lucas
sequences having infinitely many three term progressions.

In this paper, we connect to the results of Pint\'er and Ziegler. On one hand,
we give an upper bound for the number of three term arithmetic progressions provided
that there are only finitely many. On the other hand, we explicitly list all those sequences which
contain infinitely many arithmetic progressions. Finally, by restricting
ourselves to sequences with companion polynomials having a dominant root, we
find all sequences which contain a three term arithmetic progression and give a
complete list of the occurrences.

Our main result is as follows:

\begin{theorem}
\label{theorem1}
Let $u=(u_n)_{n=0}^\infty$ and $v=(v_n)_{n=0}^\infty$ be the Lucas sequences of
first and second kind, respectively, corresponding to the pair $(A,B)$. Then $u$
and $v$ admit at most $6.45\cdot 10^{2340}$ triples $(k,l,m)$ such that $u_k<u_l<u_m$ is
a three term arithmetic progression,
except when $u$ corresponds to $(A,B)=(\pm 1,1),(\pm 1,2),(-1,-2)$ and $v$
corresponds to $(A,B)=(\pm1, 1),(-1,\pm 2)$, in which cases they admit
infinitely many.
\end{theorem}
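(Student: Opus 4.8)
The plan is to translate the arithmetic condition into an exponential Diophantine equation through the Binet formulas and then exploit the root structure of the companion polynomial \eqref{charpoly}. Writing $u_n=(\alpha^n-\beta^n)/(\alpha-\beta)$ and $v_n=\alpha^n+\beta^n$, a triple $(k,l,m)$ with $u_k+u_m=2u_l$ becomes $\alpha^k+\alpha^m-2\alpha^l=\beta^k+\beta^m-2\beta^l$, and $v_k+v_m=2v_l$ becomes $\alpha^k+\alpha^m-2\alpha^l=-(\beta^k+\beta^m-2\beta^l)$. Setting $i=l-k$ and $j=m-k$, both conditions take the form $\alpha^k\,T(\alpha)=\pm\beta^k\,T(\beta)$ with $T(x)=1+x^{j}-2x^{i}$ a trinomial depending only on the two gaps. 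This is the central object: an AP belongs to an infinite family precisely when the $k$-dependence drops out, and is \emph{sporadic} otherwise.

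Next I would split according to the roots of \eqref{charpoly}. The repeated-root and $\alpha=-\beta$ cases are excluded by non-degeneracy and $A\neq 0$, so either (i) $\alpha,\beta$ are real with a dominant root $|\alpha|>|\beta|$ (whence $|\alpha|>1$), or (ii) $\alpha,\beta$ are complex conjugates with $|\alpha|=|\beta|=\sqrt{|B|}>1$ and $\alpha/\beta$ of infinite order. In the dominant-root case, from $u_k<u_l<u_m$ (and a short analysis of the sign pattern of the sequence) one gets $1<u_m/u_l<2$; since $|u_m/u_l|\to|\alpha|^{m-l}$, this bounds the gap $m-l$ by an absolute constant, after disposing by hand of the finitely many pairs $(A,B)$ with $1<|\alpha|<2$. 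Fixing the gap reduces the equation to a two-term estimate $|\alpha^{-(m-k)}-(2\alpha^{-(m-l)}-1)|\le c\,|\beta/\alpha|^{m}$. Either the bracket vanishes, which forces the minimal polynomial $x^2-Ax-B$ to divide the trinomial $T$ and produces an infinite family; enumerating the quadratic factors of all admissible trinomials then yields the finite exceptional set. Or the bracket is a fixed nonzero algebraic number, whose modulus I would bound below by a linear form in two logarithms (Matveev). Comparing this lower bound against the geometric decay $|\beta/\alpha|^{m}$ caps $m$, and, crucially, because both the Baker-type lower bound and the decay rate are governed by the height of $\alpha$, the resulting count is \emph{uniform} in $(A,B)$; this uniformity is what ultimately yields the constant $6.45\cdot10^{2340}$.

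The main obstacle is the complex-conjugate case, where there is no dominant term: writing $\alpha=\rho e^{i\theta}$ one has $u_n=\rho^{n-1}\sin(n\theta)/\sin\theta$, so all three terms have comparable modulus and the growth argument collapses. Here I would instead control a solution simultaneously in its two independent directions, namely the modulus (a power of $\rho$) and the argument ($n\theta$ modulo $2\pi$), and apply a quantitative $S$-unit/linear-forms estimate directly to the six-term vanishing sum $\alpha^k+\alpha^m-2\alpha^l\mp\beta^k\mp\beta^m\pm2\beta^l=0$, so as to bound the number of non-degenerate solutions uniformly. Vanishing proper subsums must be handled separately: each collapses to a shorter relation that is either impossible for $\alpha/\beta$ of infinite order or again forces a trinomial identity. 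It is exactly this bookkeeping, together with the monotonicity constraint $u_k<u_l<u_m$, which filters the admissible $(A,B)$ slightly differently for $u$ and for $v$, that produces the two explicit exceptional lists. I expect the technical heart of the argument to be making the complex case uniform and effective at the same time.
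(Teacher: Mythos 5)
Your overall architecture coincides with the paper's: the Binet formulas turn an arithmetic progression into equation \eqref{eq:AP}, the analysis splits on the sign of $A^2+4B$, the dominant-root case is settled by growth considerations plus explicit treatment of small $(A,B)$, and the complex-conjugate case is handled by counting solutions of the six-term $S$-unit equation and identifying the infinite families with quadratic factors of trinomials of the shape $X^a+X^b-2$ (the Pint\'er--Ziegler lemma). So the skeleton is the same as the paper's; the problem lies in your treatment of degenerate solutions, which is where most of the actual work in the paper happens.

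The genuine gap is this: your claim that every vanishing proper subsum of \eqref{eq:AP} ``collapses to a shorter relation that is either impossible for $\alpha/\beta$ of infinite order or again forces a trinomial identity'' is false, and the cases it misses cannot be waved away. For instance, the two-term vanishing subsum $\alpha^k=-2\beta^l$ (Case IV in the paper's Lemma \ref{lem:two-vanish}) is neither impossible nor a trinomial identity: multiplicative independence of $\alpha$ and $\beta$ limits it to at most one pair $(k,l)$, but then $u_k$ and $u_l$, hence the target value $2u_l-u_k$, are fixed, and you still must bound the number of indices $m$ with $u_m=2u_l-u_k$. That requires a multiplicity theorem for binary recurrences with negative discriminant --- the paper invokes Beukers' theorem and extends it to sequences of the second kind (Lemma \ref{lem:multiplicity}, multiplicity at most $3$) --- an ingredient entirely absent from your proposal, and without it no bound on the number of triples follows. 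Likewise, three-term subsums such as $\alpha^k+\alpha^m=\beta^k$ are neither impossible nor trinomial identities: one needs the $A(2,2)$ $S$-unit count to pin down $(k,m)$ and then again the multiplicity bound to control $l$; and subsums such as $\alpha^k+\alpha^m=-2\beta^l$ are eliminated by a prime-ideal valuation analysis (how $2$ and the primes dividing $B$ factor in $K$), not by root-of-unity considerations. Finally, your attribution of the constant $6.45\cdot10^{2340}$ to uniformity of Matveev-type bounds in the dominant-root case is off: in the paper that case contributes only the explicit finite lists of Theorem \ref{theorem2}, and the constant comes entirely from the complex case via the Schlickewei--Schmidt bound $A(k,s)\leq 2^{35B^3}d^{6B^2}$ applied to $C=A(5,2)+3A(3,2)+18A(2,2)+39$. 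A linear-forms-in-logarithms argument bounds the \emph{size} of solutions in terms of the heights of $\alpha$ and $\beta$, which by itself does not yield a count independent of $(A,B)$; uniform counting is precisely what the $S$-unit solution-counting theorems supply, and your proof would need to lean on them for the stated numerical bound.
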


\begin{remark}
We expect that the upper bound in Theorem \ref{theorem1} is very far from being sharp.
It would be an interesting problem to study the average number of three term arithmetic progressions in Lucas sequences of the first and second kind.
\end{remark}

By imposing stronger restrictions on the sequences we can state
much more. Namely, if the characteristic polynomial \eqref{charpoly} has a
dominant root, then it is possible to explicitly list all three term arithmetic
progressions in the corresponding sequence.

\begin{theorem}
\label{theorem2}
Let $u=(u_n)_{n=0}^\infty$ and $v=(v_n)_{n=0}^\infty$ be the Lucas sequences of
first and second kind, respectively, corresponding to the pair $(A,B)$.
Assume that $A^2+4B>0$. Then $u$ and $v$ admit no three term
arithmetic progression, except the cases listed in Tables \ref{table1} and
\ref{table2}, respectively.
\end{theorem}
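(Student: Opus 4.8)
The plan is to turn the defining relation of a three term progression into an exponential Diophantine equation and then exploit the dominant root to make the analysis completely explicit. Since $A^2+4B>0$, the roots $\alpha,\beta$ of \eqref{charpoly} are real and distinct, and because $\alpha/\beta$ is assumed not to be a root of unity the case $\alpha=-\beta$ is excluded, so $|\alpha|\neq|\beta|$; relabelling if necessary I take $|\alpha|>|\beta|$. As $|\alpha\beta|=|B|\geq 1$ forces $|\alpha|^2>|\alpha\beta|\geq 1$, we always have $|\alpha|>1$, so $\alpha$ is a genuine dominant root. Writing $u_n=(\alpha^n-\beta^n)/(\alpha-\beta)$ and $v_n=\alpha^n+\beta^n$, the requirement that $u_k<u_l<u_m$ (respectively $v_k<v_l<v_m$) be an arithmetic progression is $u_k+u_m=2u_l$ (respectively $v_k+v_m=2v_l$), which after clearing denominators becomes
\begin{equation*}
\alpha^k+\alpha^m-2\alpha^l=\pm(\beta^k+\beta^m-2\beta^l),
\end{equation*}
the sign depending on whether we are treating the first or the second kind. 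I would carry both kinds through the argument in parallel, the algebra being essentially identical.

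Next I would bound the configuration using the dominance of $\alpha$. Writing the progression condition as $u_m-u_l=u_l-u_k$ and inserting the closed forms, the left hand side has size $|\alpha|^m$ and the right hand side size $|\alpha|^l$, each up to a factor bounded in terms of $|\alpha|$ alone; equating them confines $|\alpha|^{m-l}$ to an interval of the shape $|\alpha|^{m-l}\le |\alpha|/(|\alpha|-1)$. For $|\alpha|$ large this is incompatible with $m>l$, so no progression can occur, and only finitely many pairs $(A,B)$ with $|\alpha|$ small survive; a symmetric estimate controls $l-k$. The delicate point is the non-monotonic behaviour when a root is negative, so that the value inequalities $u_k<u_l<u_m$ need not reflect the order of the indices. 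I would handle this by splitting according to the signs of $\alpha,\beta$ and the parities of $k,l,m$, isolating the dominant contribution within each residue class before comparing sizes.

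With $m-l$ and $l-k$ pinned to a finite list of gap patterns, each fixed pattern reduces the relation to a single exponential equation in the remaining free index and in the parameters $A,B$; the dominant root then bounds that index too, either elementarily or, in the few borderline cases where $|\alpha|$ is close to $1$, by appeal to the effective finiteness already furnished by Theorem \ref{theorem1}. This leaves an explicit and short finite set of candidate pairs $(A,B)$ and progressions to inspect. For each surviving pair the gap equation either has only finitely many solutions, which I would list, or degenerates into an identity producing an infinite family, which I would parametrize; the exceptional pairs of Theorem \ref{theorem1} with $A^2+4B>0$, such as the Fibonacci pair $(A,B)=(1,1)$, arise exactly in the latter way. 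Assembling the finite solutions and the parametrized families yields the entries of Tables \ref{table1} and \ref{table2}. The main obstacle is precisely this final stage: controlling the small-$|\alpha|$ regime sharply enough to keep the search finite and tractable, disentangling the sign cases, and verifying that the resulting tables are complete.
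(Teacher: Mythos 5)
Your overall outline (use root dominance to bound the indices, then finish with a finite check) is in the spirit of the paper's proof, but two of your key steps are genuinely wrong, and they are exactly the steps that carry the argument. The central quantitative claim fails: you assert that $|u_m-u_l|$ has size $|\alpha|^m$ ``up to a factor bounded in terms of $|\alpha|$ alone,'' deduce $|\alpha|^{m-l}\le |\alpha|/(|\alpha|-1)$, and conclude that for large $|\alpha|$ no progression can occur, so that only finitely many pairs $(A,B)$ survive. This is contradicted by the very theorem you are proving: Table \ref{table1} contains the pairs $(2,B)$ for \emph{every} $B\ge 1$ (progression $(u_0,u_1,u_2)=(0,1,2)$; already $(A,B)=(2,1)$ with $\alpha=1+\sqrt{2}>2$ violates your inequality) and $(1,B)$ for every $B\ge 3$ (progressions $(u_1,u_3,u_4)$ and $(u_2,u_3,u_4)$, since $u_1=u_2=1$, $u_3=1+B$, $u_4=1+2B$); in these families $|\alpha|\to\infty$ with $B$. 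The source of the error is that your implicit constant is really of the shape $1/\bigl(1-|\beta/\alpha|\bigr)$, and $|\beta/\alpha|\to 1$ as $B\to+\infty$ with $A$ fixed, so dominance degenerates precisely in the regime you dismiss; moreover, for small indices the comparison with $|\alpha|^m$ says nothing at all. This is why the paper does not compare sizes directly but proves a ratio-growth criterion (Lemmas \ref{lemmatech}, \ref{lem:growth_Lucas} and \ref{lem:growth_comp}): $|u_n/u_{n'}|>3$ for $n>n'$ with $n$ odd or $n\ge 8$, except for an explicitly listed \emph{finite} set of pairs $(A,B)$. That criterion bounds only the largest index (even and $\le 6$), after which the progression condition becomes a polynomial equation in $A$ and $B$; solving those polynomial equations is exactly where the infinite families of pairs such as $(2,B)$ and $(\pm 1,B)$ emerge. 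Your plan contains no step capable of producing them.

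Second, your fallback for the ``borderline cases where $|\alpha|$ is close to $1$'' --- appealing to Theorem \ref{theorem1} --- is inadmissible on two counts. It is circular: in the paper, the case $A^2+4B>0$ of Theorem \ref{theorem1} \emph{is} Theorem \ref{theorem2}, and even the proof for $A^2+4B<0$ in Section \ref{Sec:imag} explicitly invokes Theorem \ref{theorem2} to reduce to imaginary quadratic $\alpha,\beta$. And even if Theorem \ref{theorem1} were available, it bounds only the \emph{number} of progressions (by $6.45\cdot 10^{2340}$), not their indices: the $S$-unit counting theorems behind it are ineffective in the size of solutions, so ``at most $N$ progressions'' can never terminate a search or, as you put it, ``bound that index too.'' What is needed, and what the paper supplies, is an effective bound on the indices themselves via the growth lemmas, together with a separate elementary treatment of each of the finitely many exceptional pairs (explicit $\alpha,\beta$ and inequalities forcing $m$ to be small, as carried out for $(A,B)=(2,1)$ in Section \ref{Sec:real}).
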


\begin{table}[ht]
\centering
\begin{tabular}{|c|c|}
\hline
$(A,B)$ & $(k,l,m)$\\
\hline
$(1,1)$ & $(0,1,3),\ (2,3,4),\ (t,t+2,t+3),\ t\geq 0$\\
\hline
$(-1,1)$ & $(1,0,2),\ (t,t+1,t+3),\ t\geq 0$\\
\hline
$(1,2)$ & $(1,2t+1,2t+2),\ (2,2t+1,2t+2),\ t\geq 1$\\
\hline
$(-1,2)$ & $(t+2,t,t+1),\ t\geq 0$\\
\hline
$(2,B),\ B\geq 1$ & $(0,1,2)$\\
\hline
$(1,B),\ B\geq 3$ & $(1,3,4),\ (2,3,4)$\\
\hline
$(-1,B),\ B\geq 3$ & $(1,0,2)$\\
\hline
\end{tabular}
\caption{Triples $(k,l,m)$ s.t. $(u_k,u_l,u_m)$ is a three term AP.}
\label{table1}
\end{table}

\begin{table}[ht]
\centering
\begin{tabular}{|c|c|}
\hline
$(A,B)$ & $(k,l,m)$\\
\hline
$(1,1)$ & $(1,0,2),\ (t,t+2,t+3),\ t\geq 0$\\
\hline
$(-1,1)$ & $(t,t+1,t+3),\ t\geq 0$\\
\hline
$(-1,2)$ & $(t,t-1,t+1),\ t\geq 1$\\
\hline
$(-2,1)$ & $(1,0,2)$\\
\hline
$(1,3)$ & $(1,4,5)$\\
\hline
$(-3,-1)$ & $(1,0,2)$\\
\hline
\end{tabular}
\caption{Triples $(k,l,m)$ s.t. $(v_k,v_l,v_m)$ is a three term AP.}
\label{table2}
\end{table}

As an immediate consequence of Theorem \ref{theorem2} we have the following:

\begin{corollary}
\label{cor}
Let $u=(u_n)_{n=0}^\infty$ (resp. $v=(v_n)_{n=0}^\infty$) be the Lucas sequence
of first (resp. second) kind corresponding to the pair $(A,B)$. Assume that
$A^2+4B>0$. If $u$ (resp. $v$) contains more than two (resp. one)
three term arithmetic progressions, then $u$ (resp. $v$) contains infinitely
many three term arithmetic progressions.
\end{corollary}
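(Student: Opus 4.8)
The plan is to read the statement directly off Theorem~\ref{theorem2}, since the corollary only asserts a gap between the ``finitely many'' and ``infinitely many'' regimes that is already encoded in Tables~\ref{table1} and~\ref{table2}. Concretely, I would first observe that every entry of the two tables is of one of two types: either an explicit finite list of triples $(k,l,m)$, or a one-parameter family indexed by $t$ (such as $(t,t+2,t+3)$ with $t\ge 0$). A family of the latter type produces infinitely many pairwise distinct triples, hence infinitely many three term progressions; a list of the former type produces only the finitely many triples written down. Thus, under the hypothesis $A^2+4B>0$, a sequence has finitely many three term progressions precisely when its pair $(A,B)$ lands in a row containing no $t$-family, or in no row at all (in which case there are none).

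For the first kind I would go through the rows of Table~\ref{table1} that contain no parametric family. These are exactly $(2,B)$ with $B\ge 1$, $(1,B)$ with $B\ge 3$, and $(-1,B)$ with $B\ge 3$, contributing $1$, $2$ and $1$ triples respectively; every remaining admissible pair gives either an infinite family or no progression at all. Since the rows are pairwise disjoint in $(A,B)$, a given $u$ falls into at most one of them, so whenever $u$ has only finitely many three term progressions their number is at most $2$. Taking the contrapositive yields the first half of the corollary: more than two progressions forces $u$ into an infinite-family row, hence infinitely many.

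For the second kind the same bookkeeping applied to Table~\ref{table2} shows that the finite rows are $(-2,1)$, $(1,3)$ and $(-3,-1)$, each contributing exactly one triple, while every other admissible pair gives an infinite family or none. Hence finitely many progressions means at most one, and more than one forces an infinite family, giving the second half.

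The argument is essentially mechanical, so I do not expect a genuine obstacle; the only points requiring care are confirming that each $t$-family really consists of distinct triples, so that ``infinitely many'' is literal rather than a finite orbit, and that the maxima over the finite rows are exactly $2$ for $u$ and $1$ for $v$, since these maxima are precisely what fix the thresholds ``more than two'' and ``more than one'' in the statement. One should also keep the counting convention consistent with Theorem~\ref{theorem2}, that is, count the tabulated index-triples $(k,l,m)$ with $2u_l=u_k+u_m$.
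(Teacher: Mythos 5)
Your proposal is correct and matches the paper's own treatment: the paper offers no separate argument, presenting Corollary~\ref{cor} as an immediate consequence of Theorem~\ref{theorem2}, i.e., exactly the reading-off of Tables~\ref{table1} and~\ref{table2} that you carry out. Your bookkeeping is accurate — the finite rows contribute at most $2$ triples for $u$ and at most $1$ for $v$, and every other row is a genuine infinite $t$-family — so the thresholds in the statement are verified just as intended.
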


The ineffective methods we use in the proof of Theorem \ref{theorem1} do not
give such sharp bounds as in Corollary \ref{cor}. However, it would be
interesting to find optimal bounds also in the general case, i.e., if we drop the
condition $A^2+4B>0$ in Corollary \ref{cor}.

In the next section, we prove Theorem \ref{theorem2}. The main idea is to show
that under a testable condition, the growth of the sequence would contradict the
existence of any arithmetic progression. Further, this condition leaves only
finitely many explicitly given possibilities for $(A,B)$ which we treat one by
one, usually in an
elementary way. The proof of Theorem \ref{theorem1} in Section \ref{Sec:imag} is
based on the theory of $S$-unit equations.

\section{The dominant root case}\label{Sec:real}

This section is devoted to the case, where $A^2+4B>0$, that is, the roots of the
companion polynomial are real. First, we remark that in any linear recurrence,
the terms can be represented as polynomial-exponential sums depending on the
roots of the companion polynomial. In our situation, this leads to the
well-known formulas
\[
u_n=\dfrac{\alpha^n-\beta^n}{\alpha-\beta} \qquad \mathrm{and} \qquad
v_n=\alpha^n+\beta^n \qquad (n\geq 0).
\]
We will regularly use this definition of Lucas sequences
instead of the recurrence relation \eqref{recurrence}.

The following lemma concerns equations satisfied by three term arithmetic
progressions.

\begin{lemma}
\label{lemmatech}
Let $u=(u_n)_{n=0}^\infty$ be any sequence. Suppose that for some $n_0$ we
have $|u_n/u_{n'}|>3$ for every $n>n_0$ and every $n'<n$. If $m>n_0$, then
neither of the
equations
\begin{equation}
\label{equation:klm}
u_k-2u_l+u_m=0, \qquad u_l-2u_k+u_m=0,\qquad u_k-2u_m+u_l=0
\end{equation}
has a solution $(k,l,m)$ with $k<l<m$. In particular, $u$ does not admit any
three term arithmetic progression with terms corresponding to the indices $k,l$
and $m$ with $\max\{k,l,m\}>n_0$.
\end{lemma}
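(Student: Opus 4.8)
The plan is to prove that none of the three displayed equations in \eqref{equation:klm} admits a solution $(k,l,m)$ with $k<l<m$; the final ``in particular'' assertion then follows at once. Indeed, three terms $u_k,u_l,u_m$ form a three term arithmetic progression precisely when one of them is the average of the other two, and the three possible choices of midterm are exactly the three equations in \eqref{equation:klm}: the first expresses that $u_l$ is the midterm, the second that $u_k$ is, and the third that $u_m$ is. Throughout I read the growth hypothesis in the form $|u_n|>3|u_{n'}|$ for all $n>n_0$ and all $n'<n$; this is meaningful even if some earlier term vanishes, and in particular it forces $u_m\neq 0$ whenever $m>n_0$.

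The key observation is that in every one of the three equations the index $m$ is the largest, so $m>n_0$ while $k,l<m$, and hence the hypothesis yields the two strict bounds $|u_m|>3|u_k|$ and $|u_m|>3|u_l|$, that is $|u_k|,|u_l|<\tfrac{1}{3}|u_m|$. The term $u_m$ is therefore too large to be balanced by the other two, and I would make this precise by isolating the $u_m$-term and applying the triangle inequality. For the first equation $u_m=2u_l-u_k$ gives
\[
|u_m| \le 2|u_l|+|u_k| < \frac{2}{3}|u_m|+\frac{1}{3}|u_m| = |u_m|,
\]
a contradiction; the second equation is identical with the roles of $k$ and $l$ interchanged. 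For the third equation $u_m$ is the midterm, so $2u_m=u_k+u_l$ and
\[
2|u_m| \le |u_k|+|u_l| < \frac{2}{3}|u_m|,
\]
which is absurd since $u_m\neq 0$. In each case the existence of a solution is untenable.

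It is worth noting that the constant $3$ is exactly the threshold that makes the argument run: the worst case is the first (or second) equation, where the two smaller terms contribute at most $2\cdot\tfrac{1}{3}+\tfrac{1}{3}=1$ times $|u_m|$, and it is precisely the strictness of the hypothesis $|u_m|>3|u_k|,3|u_l|$ that converts this into the strict inequality needed for a contradiction. There is no genuine obstacle here — the whole proof is the triangle inequality combined with the growth hypothesis. The only points requiring a little care are to check that all three sign patterns in \eqref{equation:klm} are covered (in particular the third, where the largest-indexed term carries the coefficient $-2$ and plays the role of the midterm), and that the degenerate possibility $u_m=0$ is ruled out by the hypothesis.
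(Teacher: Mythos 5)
Your proof is correct and is essentially the paper's own argument: both rest on the observation that $m$ is the largest index, so $|u_k|,|u_l|<\tfrac{1}{3}|u_m|$, and then a triangle-inequality estimate shows the $u_m$-term cannot be cancelled (the paper phrases it via the reverse triangle inequality, $|u_k-2u_l+u_m|\geq |u_m|-(|u_k|+2|u_l|)>0$, which is the same computation). Your write-up is in fact slightly more complete, since you treat the third equation $2u_m=u_k+u_l$ explicitly and note that the hypothesis forces $u_m\neq 0$, points the paper leaves to the reader.
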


\begin{proof}
Assuming that $n_0<m$ gives us either $3|u_k|<3|u_l|<|u_m|$ or
$3|u_l|<3|u_k|<|u_m|$. In the former case, we have
\[
|u_k-2u_l+u_m|\geq |u_m|-|u_k-2u_l|\geq |u_m|-(|u_k|+2|u_l|)>3|u_l|-3|u_l|=0.
\]
Thus the first equation of \eqref{equation:klm} has no solution with $k<l<m$.
In an analogous way, one can get the same deduction for $3|u_l|<3|u_k|<|u_m|$
and it is easy to see that the second and third equations of
\eqref{equation:klm} are unsolvable as well.
\end{proof}

We are in need of a good criterion for the applicability of Lemma
\ref{lemmatech}.

\begin{lemma}
\label{lem:growth_Lucas}
Let $u=(u_n)_{n=0}^\infty$ be the Lucas sequence of first kind corresponding to
the pair
$(A,B)$ with $A^2+4B>0$. Assume that $n>n'$. Then $|u_n/u_{n'}|>3$ if $n$ is odd
or $n\geq 8$, unless
\begin{itemize}
 \item $B<0$ and $|A|\leq 6$ or
 \item $|A|=1$ and $0<B\leq 9$ or
 \item $|A|=2$ and $0<B\leq 3$.
\end{itemize}
\end{lemma}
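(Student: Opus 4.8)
The plan is to work directly with the closed form $u_n=(\alpha^n-\beta^n)/(\alpha-\beta)$ and to estimate the ratios $|u_n/u_{n'}|$. Since $A^2+4B>0$ the roots are real and distinct, and since $A\neq 0$ they have distinct absolute values; label them so that $|\alpha|>|\beta|$. From $|\alpha||\beta|=|B|\geq 1$ together with $|\alpha|>|\beta|$ one gets $|\alpha|>1$, so with $\rho:=\beta/\alpha$ we have $|\rho|<1$. Writing $u_n=\alpha^{n-1}(1-\rho^n)/(1-\rho)$, the ratio becomes
\[
\left|\frac{u_n}{u_{n'}}\right|=|\alpha|^{\,n-n'}\,\frac{|1-\rho^n|}{|1-\rho^{n'}|}.
\]
First I would reduce to the case $n'=n-1$: a short computation shows $|u_{m+1}/u_m|\geq\min\{|A|,|\alpha|\}\geq 1$ for every $m\geq 1$ (the worst even step gives exactly $|\alpha|(1-\rho^2)/(1+|\rho|)=|\alpha|-|\beta|=|A|$ when $B>0$), so the sequence $(|u_m|)_{m\geq 1}$ is nondecreasing and $\max_{n'<n}|u_{n'}|=|u_{n-1}|$. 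Hence the claim $|u_n/u_{n'}|>3$ for all $n'<n$ reduces to bounding the single consecutive ratio $|u_n/u_{n-1}|=|\alpha|\,|1-\rho^n|/|1-\rho^{n-1}|$ from below.

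The sign of $\rho$ is decisive and equals $-\sign(B)$, so I would split into two cases. If $B<0$ then $\rho\in(0,1)$ and $(1-\rho^n)/(1-\rho^{n-1})>1$, so $|u_n/u_{n-1}|>|\alpha|$ for every $n$; thus $|\alpha|>3$ already forces the conclusion for all $n$. Since $|\alpha|=(|A|+\sqrt{A^2+4B})/2\geq |A|/2$, this holds as soon as $|A|\geq 7$, which is exactly the complement of the first exceptional family. If $B>0$ then $\rho\in(-1,0)$, and with $\delta:=|\rho|$ the consecutive ratio equals $|\alpha|(1+\delta^n)/(1-\delta^{n-1})>|\alpha|$ for $n$ odd and $|\alpha|(1-\delta^n)/(1+\delta^{n-1})<|\alpha|$ for $n$ even. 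Consequently the hypothesis ``$n$ odd'' yields $|u_n/u_{n-1}|>|\alpha|$ (so $|\alpha|\geq 3$ suffices there), while for even $n\geq 8$ the quotient $(1-\delta^n)/(1+\delta^{n-1})$ is increasing in $n$, so its minimum over even $n\geq 8$ is attained at $n=8$ and the binding inequality is
\[
|\alpha|\,\frac{1-\delta^8}{1+\delta^7}>3.
\]

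It then remains to translate these inequalities into conditions on $(A,B)$ and to locate where they fail. Using $|\alpha|=(|A|+D)/2$ and $\delta=(D-|A|)/(D+|A|)$ with $D=\sqrt{A^2+4B}$, one checks that the left-hand side above tends to $4|A|$ as $B\to\infty$ and increases with $|A|$, so both inequalities hold for all $B>0$ outside an explicit bounded region; concretely they hold whenever $|A|\geq 3$, and for $|A|\in\{1,2\}$ whenever $B$ exceeds an explicit bound. The finitely many remaining pairs are checked one by one, and those for which the $n=8$ estimate drops to $\leq 3$ are precisely $|A|=1,\ 0<B\leq 9$ and $|A|=2,\ 0<B\leq 3$, matching the two remaining exceptional families.

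I expect the delicate part to be the even-index regime. The cutoff $n\geq 8$ and the exact boundaries $B\leq 9$ (for $|A|=1$) and $B\leq 3$ (for $|A|=2$) are all governed by the single inequality at $n=8$, so the estimate there must be sharp rather than asymptotic; in particular one cannot afford the lossy bound $|1-\delta^{n-1}|\leq 1+\delta$, and one must verify that no even index beyond $8$ produces a smaller ratio. Care is also needed to justify the reduction to $n'=n-1$ uniformly despite the sign oscillation caused by $\rho<0$. By contrast the case $B<0$ is clean, since the terms do not oscillate in sign relative to one another and the consecutive ratio is bounded below by $|\alpha|$ for every index.
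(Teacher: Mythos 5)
Your proposal is correct and follows essentially the same route as the paper's proof: the Binet form and the factorization $|u_n/u_{n-1}|=|\alpha|\,|1-\theta^n|/|1-\theta^{n-1}|$, the reduction to the consecutive ratio via the fact that all consecutive ratios are at least $1$ (the first even step being exactly $|A|$), the observation that $\theta>0$ (i.e.\ $B<0$) and odd $n$ are trivial, the monotonicity of the even-index consecutive ratios reducing everything to the single inequality $|u_8/u_7|>3$, and finally the translation of that inequality into the three exceptional families of pairs $(A,B)$. The only difference is cosmetic: you dispatch $B<0$ directly from $|u_n/u_{n-1}|>|\alpha|>|A|/2\geq 7/2$, whereas the paper also runs its $n=8$ estimate in that case — a mild simplification, not a different approach.
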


\begin{proof}
Under our conditions the polynomial $X^2-AX-B$ has a
dominant root, say $\alpha$. Note that if $|A|>6$ or $|B|>9$, then we have
$|\alpha|>3$ and may write
\[
\left|\dfrac{u_n}{u_{n-1}}\right|=\left|\dfrac{\dfrac{\alpha^n-\beta^n}{
\alpha-\beta}}{\dfrac{\alpha^{n-1}-\beta^{n-1}}{\alpha-\beta}}
\right|=\left|\dfrac{\alpha^n}{\alpha^{n-1}}\dfrac{1-\left(\frac{\beta}{\alpha}
\right)^{n}}{1-\left(\frac{\beta}{\alpha}\right)^{n-1}}
\right|=|\alpha|\left|\dfrac{1-\theta^n}{1-\theta^{n-1}}\right|>3\left|\dfrac{
1-\theta^n}{1-\theta^{n-1}}\right|
\]
with $\theta=\beta/\alpha$. It suffices to prove that
\[
\left|\dfrac{
1-\theta^n}{1-\theta^{n-1}}\right|>1.
\]
Observe that there are trivial cases. These include $\theta>0$ and also
$\theta<0$ when $n$ is odd. What remains to consider is the case when $\theta<0$
and $n$ is even.

Next, we show that the quotients $|u_{2n+2}/u_{2n+1}|$ increase with
$n$, that is, they satisfy the inequality
\[
\left|\dfrac{u_{2n+2}}{u_{2n+1}}\right|>\left|\dfrac{u_{2n}}{u_{2n-1}}\right|.
\]
Equivalently, we may write
\[
|\alpha|\left|\dfrac{1-\theta^{2n+2}}{1-\theta^{2n+1}}
\right|>|\alpha|\left|\dfrac{1-\theta^{2n}}{1-\theta^{2n-1}}\right|.
\]
Since $0<|\theta|<1$, it reduces to
\[
\dfrac{1-\theta^{2n+2}}{1-\theta^{2n+1}}>\dfrac{1-\theta^{2n}}{1-\theta^{2n-1}}
\]
giving us
\[
1-\theta^{2n-1}-\theta^{2n+2}+\theta^{4n+1}>1-\theta^{2n+1}-\theta^{2n}+\theta^{
4n+1}.
\]
Collecting the terms on the left hand side and dividing both sides of the
inequality by $-\theta^{2n-1}>0$ we obtain
\[
\theta^3-\theta^2-\theta+1=(\theta-1)^2(\theta+1)>0,
\]
which is obviously true for any $\theta$ with $0<|\theta|<1$. Since
$|u_2/u_1|=|A|\geq 1$, we have, in particular, that $|u_{n}/u_{n-1}|>1$ for all
$n\geq 0$. This proves the lemma for all odd $n$ and all $n>2n_0$ with
$|u_{2n_0}/u_{2n_0-1}|>3$. Thus we need to show that $|u_{8}/u_{7}|>3$ under our
hypotheses.

We distinguish between the cases where $B>0$ and $B<0$. Let us start with $B<0$.
We have
\[
|\theta|=\left|\frac{\beta}{\alpha}\right|=\frac{|A|-\sqrt{A^2+4B}}{|A|+\sqrt{
A^2+4B}}\leq \frac{|A|-1}{|A|+1}=1-\frac{2}{|A|+1}.
\]
Note that in this case we also have that $|\alpha|>\frac{|A|+1}2$. Write $x=\frac{|A|+1}2$.
We show that
\[
\left|\dfrac{u_{8}}{u_{7}}\right|\geq x \frac{1-\left(1-\dfrac
1x\right)^{8}}{1+\left(1-\dfrac 1x\right)^{7}}>3.
\]
Solving this inequality for $x$ shows that the second inequality holds unless
$x<3.55$, i.e., $|A|<6.1$.

When $B>0$, we have
\[
|\theta|=\left|\frac{\beta}{\alpha}\right|=\frac{\sqrt{A^2+4B}-|A|}{\sqrt{A^2+4B
}+|A|}=1-\frac{2|A|}{\sqrt{A^2+4B}+|A|}
\]
and with $|A|\geq 3$ we obtain
\[
|\theta|\geq 1+\frac{6}{\sqrt{9+4B}+3}
\]
and
\[
|\alpha|>\dfrac{\sqrt{9+4B}+3}2=:y.
\]
We need to show that
\[
\left|\dfrac{u_{8}}{u_{7}}\right|\geq y \frac{1-\left(1-\dfrac
3y\right)^{8}}{1+\left(1-\dfrac 3y\right)^{7}}>3.
\]
The inequality holds unless $y\leq 3$. However, $y\leq 3$ is impossible, since
it would imply $B\leq 0$ which we excluded.

Similar arguments for the cases $|A|=2$ and $|A|=1$ yield that
$\left|\dfrac{u_{8}}{u_{7}}\right|>3$ provided that $B>9$ and $B>3$,
respectively.
\end{proof}

A similar argument for Lucas sequences of second kind reveals the following.

\begin{lemma}
\label{lem:growth_comp}
 Let $v=(v_n)_{n=0}^\infty$ be the Lucas sequence of second kind corresponding to the pair
$(A,B)$, with $A^2+4B>0$. Assume that $n>n'$. Then $|u_n/u_{n'}|>3$ if $n$ is
even or $n\geq 7$, unless
\begin{itemize}
 \item $B<0$ and $|A|\leq 7$ or
 \item $|A|=1$ and $0<B\leq 14$ or
 \item $|A|=2$ and $0<B\leq 3$.
\end{itemize}
\end{lemma}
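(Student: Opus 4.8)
The plan is to run the proof of Lemma~\ref{lem:growth_Lucas} in parallel, replacing $u_n$ by $v_n=\alpha^n+\beta^n$ throughout. Writing $\theta=\beta/\alpha$ for the ratio of the subdominant to the dominant root, so that $0<|\theta|<1$, the basic identity becomes
\[
\left|\frac{v_n}{v_{n-1}}\right|=|\alpha|\left|\frac{1+\theta^n}{1+\theta^{n-1}}\right|,
\]
which differs from the first-kind formula only in the sign of the terms $\theta^n$ and $\theta^{n-1}$. As before, one first records that outside the exceptional list the dominant root satisfies $|\alpha|>3$, so that it suffices to control the fractional factor $g(n)=(1+\theta^n)/(1+\theta^{n-1})$.

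The decisive feature is a reversal of parity. Since $B>0$ forces $\theta<0$ while $B<0$ forces $\theta>0$, a direct sign analysis shows that $|g(n)|>1$ holds for free precisely when $\theta<0$ and $n$ is even, that is, when $B>0$ and $n$ is even; this is exactly why the clause ``$n$ odd'' of Lemma~\ref{lem:growth_Lucas} is here replaced by ``$n$ even''. The remaining, nontrivial configurations are $B<0$ (for every $n$) and $B>0$ with $n$ odd, and for these I would show that the appropriate subsequence of consecutive quotients increases: the odd-indexed quotients when $B>0$, and all quotients when $B<0$. Repeating the computation of Lemma~\ref{lem:growth_Lucas}, the inequality $\left|v_{2n+1}/v_{2n}\right|>\left|v_{2n-1}/v_{2n-2}\right|$ reduces, after clearing denominators, to $\theta^{2n-2}(1-\theta)^2(1+\theta)>0$, whereas monotonicity of all quotients for $\theta>0$ reduces to $\theta^{n-1}(1-\theta)^2>0$; both are evident for $0<|\theta|<1$. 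Together with the fact that all consecutive quotients exceed $1$ (which forces $|v_n/v_{n'}|\ge|v_n/v_{n-1}|$), these reductions bring the statement down to a single base inequality.

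That base inequality is $|v_7/v_6|>3$, the index $7$ accounting for the clause ``$n\ge 7$''. Here I would split into $B<0$ and $B>0$ and insert the explicit estimates already used in Lemma~\ref{lem:growth_Lucas}: $|\theta|\le(|A|-1)/(|A|+1)$ together with $|\alpha|>(|A|+1)/2$ when $B<0$, and the analogous bounds in terms of $\sqrt{A^2+4B}$ when $B>0$. Feeding these into $|v_7/v_6|=|\alpha|\,|g(7)|$ produces, for $B<0$, a lower bound of the shape
\[
\left|\frac{v_7}{v_6}\right|\ge x\,\frac{1-\left(1-\frac1x\right)^{7}}{1+\left(1-\frac1x\right)^{6}},\qquad x=\frac{|A|+1}{2},
\]
and solving $x\,\frac{1-(1-1/x)^7}{1+(1-1/x)^6}>3$ for $x$ gives the threshold $|A|\ge 8$; the case $B>0$ is treated in the same way with $y=(\sqrt{9+4B}+3)/2$ in place of $x$. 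The two small values $|A|=1$ and $|A|=2$, where $|\alpha|$ is too small for this generic estimate, must be examined separately and yield the ranges $0<B\le 14$ and $0<B\le 3$.

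I expect the main obstacle to be the configurations with $g(n)<1$, namely $B<0$ for all $n$ and $B>0$ for odd $n$, which did not occur for the first kind (there $B<0$ was the trivial case). In these configurations the fractional factor cannot by itself guarantee growth, so one must genuinely combine $|\alpha|>3$ with the monotonicity toward $|\alpha|$; and because the initial data $v_0=2,\ v_1=A$ make the early quotients smaller than their first-kind counterparts, the clean estimate succeeds only on slightly smaller parameter sets. This is precisely why the exceptional ranges ($|A|\le 7$ for $B<0$ and $0<B\le 14$ for $|A|=1$) come out larger than those in Lemma~\ref{lem:growth_Lucas}. The remaining care is bookkeeping: confirming that this (over-approximated) exceptional list is safe, i.e.\ that the generic estimate does hold everywhere off it.
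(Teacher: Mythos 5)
Your route is the paper's route: the published proof simply says that the argument of Lemma~\ref{lem:growth_Lucas} carries over, and your reconstruction of that argument is largely accurate --- the parity reversal ($B>0$ with $n$ even is now the trivial case), both monotonicity reductions (your factorizations $\theta^{2n-2}(1-\theta)^2(1+\theta)>0$ and $\theta^{n-1}(1-\theta)^2>0$ are exactly right), and the base inequality $|v_7/v_6|>3$ with thresholds reproducing the exceptional ranges $|A|\le 7$, $0<B\le 14$, $0<B\le 3$.

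There is, however, one genuine gap, and it is precisely the single point that the paper's otherwise-omitted proof spells out. Your chaining step invokes ``the fact that all consecutive quotients exceed $1$,'' but this is false when $|A|=1$, for every $B$: since $v_0=2$ and $v_1=A$, one has $|v_1/v_0|=1/2<1$, and this does not improve as $B$ grows. The case $|A|=1$, $B\ge 15$ lies outside the exceptional list, so the lemma asserts $|v_n/v_{n'}|>3$ there for all $n'<n$, in particular for $n'=0$; but then
\[
\left|\frac{v_n}{v_0}\right|=\prod_{j=1}^{n}\left|\frac{v_j}{v_{j-1}}\right|
\]
contains the factor $1/2$, so your base inequality together with ``all other factors $\ge 1$'' yields only $|v_n/v_0|>3/2$. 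Your separate treatment of $|A|=1$ does not repair this, because it is aimed at the size of $|\alpha|$ in the generic estimate (which is what produces the threshold $B\ge 15$), not at the lost factor of $2$, which persists for all $B$. The fix is exactly what the paper records: for $|A|=1$ one can only assert $|v_j/v_{j-1}|\ge 1$ for $j\ge 2$, so the case $n'=0$ must be settled by showing $|v_n|>6$ directly; indeed $|v_7|=7B^3+14B^2+7B+1>6$ for $B\ge 1$, whence $|v_n|\ge |v_7|>6$ for all $n\ge 7$, and $|v_n|\ge |v_2|=1+2B>6$ for even $n\ge 2$. A smaller instance of the same caution: for $B<0$ your ``single base inequality at $n=7$'' does not by itself cover the even indices $n=2,4,6$, which are \emph{not} trivial when $\theta>0$; there you need that the increasing quotients start at $|v_1/v_0|=|A|/2$, which exceeds $3$ once $|A|\ge 8$.
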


\begin{proof}
 Since the proof of this Lemma is similar to that of Lemma
\ref{lem:growth_Lucas}, we omit most of the details. Note that when $|A|=1$ we
have $|v_1/v_0|=1/2<1$ and we can only deduce $|v_n/v_{n-1}|\geq 1$ provided
that $n\geq 2$. Thus we shall ensure $|v_7|\geq 6$. Direct computation yields
that indeed
\[
|v_7|=7B^3+14B^2+7B+1
\]
when $|A|=1$. Since $|A|=1$ implies that $B\geq 1$, we get $|v_7|>6$.
\end{proof}

Now we apply Lemma \ref{lemmatech} in combination with Lemma
\ref{lem:growth_Lucas} and find that if $u$ admits a three term
arithmetic progression, then one of the equations in \eqref{equation:klm} must
have a solution with even $m\leq 6$. Writing the terms of $u$ as polynomials in
$A$ and $B$ we have to deal with finitely many equations. We only present the main
ideas through one example for each different type of possible equations, we treat the rest in a similar manner.

\textbf{Case $m=2$.} Since $u_0=0,\ u_1=1$ and $u_2=A$, we only get trivial
equations like $A-2=0$.

\textbf{Case $m=4$.} The corresponding equations are linear in $B$. For
example, the triple $(k,l,m)=(1,2,4)$ substituted into the second equation of
\eqref{equation:klm} gives us
\[
A^3+2AB+A-2=0
\]
implying
\[
B=\dfrac{-A^3-A+2}{2A}.
\]
Hence $A\mid 2$, i.e., $A=\pm1,\pm2$. However,
we see that one of the conditions $AB\neq 0$ and $A^2+4B>0$ fails in all cases. One can handle the other equations similarly. 

\textbf{Case $m=6$.} The corresponding equations are quadratic in $B$. Our
general strategy is to push the discriminant of each equation between two squares and solve the
parametric equation for each square between these lower and upper bounds. For
example, the triple $(k,l,m)=(0,3,6)$ and the third equation of
\eqref{equation:klm} gives
\[
-6AB^2+(-8A^3+1)B-2A^5+A^2=0.
\]
The discriminant is $D=16A^6+8A^3+1=(4A^3+1)^2$. However, solving this for $B$
we get the pair $(A,B)=(A,-A^2)$ and $A^2+4B>0$ fails to hold.

Let us consider another example. For instance, take the triple
$(k,l,m)=(0,2,6)$. Then we obtain $D=A^6+6A^2-3A$. But
$$A^6<A^6+6A^2-3A<(|A|^3+1)^2$$ provided that $|A|>3$ and we deduce that $D$
is never a square if $|A|>3$.
For those with $|A|\leq 3$ and $A\neq 0$ we obtain that $D$ is a square if
and only if $A=1$. In this case, $B=0$ which contradicts our assumption.

The same approach for each triple gives the complete list of three term arithmetic
progressions.

We are left with the finite number of sequences satisfying
\begin{itemize}
 \item $B<0$ and $|A|\leq 6$ or
 \item $|A|=1$ and $0<B\leq 9$ or
 \item $|A|=2$ and $0<B\leq 3$.
\end{itemize}
In these finitely many cases, we can use a growth argument to find
all three term arithmetic progressions.

Let us demonstrate this by an example, say $A=2$ and $B=1$ and $(u_n)_{n=0}^\infty$ being a Lucas sequence of first kind. One may handle all the other finitely many equations by a similar reasoning. First note
that in this specific case we have that $\alpha=1+\sqrt{2}$ and
$\beta=1-\sqrt{2}$. Assume that $u_k<u_l<u_m$ is a three term arithmetic
progression and consider the first equation of \eqref{equation:klm} which is
equivalent to
\[
\alpha^k+\alpha^m-2\alpha^l=\beta^k+\beta^m-2\beta^m.
\]
Let us assume for the moment that $m>l+1>k+1$. Then
\[
|\alpha|^m-2|\alpha|^{m-2}-|\alpha|^{m-3}
<|\alpha^k+\alpha^m-2\alpha^l|=|\beta^k+\beta^m-2\beta^m|<4|\beta|^k.
\]
Plugging in the concrete values of $\alpha$ and $\beta$ we immediately get
\[
(1+\sqrt{2})^{m-3}\left((1+\sqrt{2})^3-2(1+\sqrt{2})-1\right)=(4+3\sqrt
2)(1+\sqrt{2})^{m-3}<4
\]
which yields a contradiction unless $m<3$. Now it is easy to find all three term
arithmetic progressions in this case. Note that in case of $l>m$ or $k>m$ we may draw similar conclusions.

Hence we can assume that $m=l+1>k+1$. Then
\[
|\alpha|^m-2|\alpha|^{m-1}<|\alpha|^{k}+4|\beta|^k
\]
and inserting the concrete values of $\alpha$ and $\beta$ we get
\[
(1+\sqrt 2)^{m-2}<(1+\sqrt 2)^{k}+4.
\]
Assuming $k\leq m-3$ yields
\[
\sqrt{2}(1+\sqrt 2)^{m-3}<4,
\]
whence $m\leq 4$ and we easily find all three term arithmetic progressions
listed in Table \ref{table1} for this case. Finally, we have to consider the
case when $m=l+1=k+2$. Here we get the inequality
\[
\left|\alpha^m-2\alpha^{m-1}-\alpha^{m-2}\right|<4|\beta|^k
\]
which yields
\[
2(1+\sqrt 2)^{m-2}<4,
\]
that is, $m\leq 3$ and we find no additional three term arithmetic progressions.
Similar arguments for different orderings of $u_k,u_l,u_m$ reveal no further
solutions. Thus the case $A=2$ and $B=1$ is completely solved.

\section{The case $A^2+4B<0$}\label{Sec:imag}

Unfortunately, we found no effective method to resolve this case completely. The
reason has its roots in the use of the theory of $S$-unit equations. Indeed, the
indices $k,l,m$ corresponding to a non-trivial arithmetic progression
$u_k<u_l<u_m$ contained in a Lucas sequence $(u_n)_{n= 0}^\infty$ of first kind or a
Lucas sequence $(v_n)_{n= 0}^\infty$ of second kind satisfies the Diophantine equation
\begin{equation}\label{eq:AP}
\alpha^k+\alpha^m-2\alpha^l=\pm \left(\beta^k+\beta^m-2\beta^l\right),
\end{equation}
where the $\pm$ sign depends on whether we consider Lucas sequences of first or
second kind. In view of Theorem \ref{theorem1}, it is
crucial to discuss the number of solutions to $S$-unit equation
\eqref{eq:AP}.
In the real case (cf. Section \ref{Sec:real}), we resolved this $S$-unit
equation by
using the fact that $|\alpha|>|\beta|$ and that $\left|\dfrac{u_n}{u_{n-1}}\right|\simeq |\alpha|$ as $n\rightarrow \infty$.
If $A^2+4B<0$, then this is no longer true and we have to apply
the deep theory of $S$-unit equations.

In order to determine an upper bound for the number of solutions we prove a
series
of Lemmas which culminate in a proof for Theorem \ref{theorem1}.

Before we start with the first lemma, let us note that both $\alpha$ and $\beta$
are algebraic integers. Moreover, due to Theorem \ref{theorem2}, which was proved in
the previous section, we may assume that $\alpha$ and $\beta$ are conjugate
imaginary quadratic integers. Hence we can suppose that $\alpha$ and
$\beta$ are not units, otherwise $\alpha/\beta$ would be a root of unity. Thus
we may suppose that $|B|\geq 2$. Write $K=\Q(\alpha)=\Q(\beta)$ and denote by
$\sigma$ the unique, nontrivial $\Q$-automorphism of $K$, i.e.,
$\mathrm{Gal}(K/\Q)=\{\mathrm{id},\sigma\}$. Since $K$ is an imaginary quadratic
field, we also have that $\sigma(\epsilon)=\epsilon^{-1}$, where $\epsilon$ is
any unit in $K$.

For fixed $\alpha$ and $\beta$ we denote by $C_1$ and $C_2$ the number of
non-trivial three term arithmetic progressions of $(u_n)_{n=0}^{\infty}$ and
$(v_n)_{n=0}^{\infty}$, respectively and write $C=\max\{C_1,C_2\}$. Actually,
our upper bounds for $C_1$ and $C_2$ coincide in each instance and hence we use
only the quantity $C$.

Let $\Gamma=\langle\alpha,\beta\rangle\leq \bar\Q^*$ be the
multiplicative group generated by $\alpha$ and $\beta$. We denote by
$A=A(a_0,\dots,a_r)$ the number of non-degenerate, projective solutions
$\lbrack x_0:\dots:x_r \rbrack\in\mathbb{P}^r(\Gamma)$ of the weighted, homogeneous $S$-unit
equation
\begin{equation}\label{eq:S-unit}
 a_0u_0+\dots+a_ru_r=0.
\end{equation}
with $a_0,\dots,a_r\in \mathbb{C}$, in unknowns $u_0,\dots,u_r\in\Gamma$. By a non-degenerate solution we mean a solution to equation \eqref{eq:S-unit} such that no subsum on the left hand side
of equation \eqref{eq:S-unit} vanishes. Upper bounds $A(a_0,\dots,a_r)\leq
A(r,s)$, where $s$ denotes the rank of $\Gamma$, have been found most prominently
by Evertse, Schlickewei and Schmidt \cite{Evertse:2002} and Amoroso and Viada
\cite{Amoroso:2009} in the most general case. In particular, we use the bounds
due to Schlickewei and Schmidt \cite{Schlickewei:2000} which in our case yield better
results although they depend on $d=\lbrack K:\Q \rbrack$, as well.

As already noted above, three term arithmetic progressions in the Lucas
sequences $(u_n)_{n=0}^\infty$ and $(v_n)_{n=0}^\infty$ yield solutions to
\eqref{eq:AP}. Thus we study the weighted $S$-unit equation
\begin{equation}\label{eq:S-unit2}
x_1+x_2-2x_3=\pm(y_1+y_2-2y_3),
\end{equation}
where $x_1,x_2,x_3,y_1,y_2,y_3\in \Gamma=\langle\alpha,\beta\rangle$ and, in
particular, where $x_1=\alpha^k,x_2=\alpha^m,x_3=\alpha^l,
y_1=\beta^k,y_2=\beta^m,y_3=\beta^l$.

Before we start investigating $S$-unit equations of type \eqref{eq:S-unit2} we state a theorem due to Beukers~\cite[Theorem 2]{Beukers:1980}
and draw some simple conclusions from it.

\begin{lemma}[Beukers~\cite{Beukers:1980}]
Let the non-degenerate recurrence sequence of rational integers $(u_n)_{n=0}^\infty$ be given by $u_0\geq 0$, $\gcd(u_0, u_1)=1$,
$u_n = Au_{n-1} - Bu_{n-2}$, with $A, B\in\Z$ and $A\geq 0$. Assume that $A^2 - 4B < 0$. If $u_m =\pm u_0$ has
more than three solutions $m$, then one of the following cases holds:
\begin{align*}
A=1, B=2,& u_0=u_1=1& \text{when\ \ }& \ m=0,1,2,4,12;\\
A=1, B=2,& u_0=1, u_1=-1& \text{when\ \ }& \ m=0,1,3,11;\\
A=3, B=4,& u_0=u_1=1& \text{when\ \ }& \ m=0,1,2,6;\\
A=2, B=3,& u_0=u_1=1& \text{when\ \ }& \ m=0,1,2,5.
\end{align*}
\end{lemma}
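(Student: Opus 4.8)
This statement is Beukers' theorem on the multiplicity of binary recurrences, so in the paper it is invoked verbatim from \cite{Beukers:1980} and the ``proof'' amounts to a reference. Were I to reconstruct it, I would start from the observation that, since $A\geq 0$ and $A^2-4B<0$, the companion roots are complex conjugates $\alpha,\bar\alpha$ with $|\alpha|=\sqrt{B}$. Writing $\alpha=\sqrt{B}\,e^{i\phi}$, every real solution of the recurrence has the trigonometric shape $u_n=2\rho\,B^{n/2}\cos(n\phi+\psi)$ for suitable real $\rho>0$ and $\psi$. The equation $u_m=\pm u_0$ then reads $B^{m/2}\cos(m\phi+\psi)=\pm\cos\psi$, so for growing $m$ the cosine factor must have size $O(B^{-m/2})$; this forces $m\phi+\psi$ to lie within an exponentially small window around $\tfrac{\pi}{2}+k\pi$ for some integer $k$.

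The plan is to convert the existence of many solutions into a Diophantine approximation statement for the angle $\phi/\pi$. Rewriting $u_m=\pm u_0$ as $p(\alpha^m\mp 1)=-\bar p(\bar\alpha^m\mp 1)$ and passing to arguments (the two sides automatically have equal modulus, since each is the conjugate of the other), a solution is exactly the congruence $\arg(\alpha^m\mp1)\equiv \tfrac{\pi}{2}-\psi \pmod{\pi}$, which for large $m$ becomes $m\phi\equiv \tfrac{\pi}{2}-\psi \pmod{\pi}$ up to an error $O(B^{-m/2})$. First I would establish a gap principle: if $m<m'$ are two large solutions, then $(m'-m)\phi$ is within $O(B^{-m/2})$ of a multiple of $\pi$, i.e.\ $\alpha^{m'-m}$ is abnormally close to being real, which in integer terms forces an associated recurrence term to be very small relative to $B^{(m'-m)/2}$. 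Iterating shows the admissible gaps must grow, so the solutions are sparse.

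Next I would supply the quantitative input that renders the count effective. The decisive tool is an effective lower bound for $\bigl|\,m\phi-(k+\tfrac12)\pi\,\bigr|$, equivalently an irrationality measure for $\phi/\pi$, or, what is the same, a measure of how closely $\alpha/\bar\alpha=\alpha^2/B$ can approximate a root of unity. Beukers obtains sharp constants from the hypergeometric (Pad\'e approximation) method rather than from Baker-type linear forms in logarithms, and this sharpness is exactly what is needed: combined with the gap principle it bounds the largest admissible $m$ by an explicit constant, so that at most three solutions survive outside finitely many pairs $(A,B)$. Those remaining pairs, together with the small indices, are then checked by direct computation, producing precisely the four families $A=1,B=2$; $A=3,B=4$; and $A=2,B=3$ recorded in the statement.

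The main obstacle, and the reason a purely qualitative ``finitely many solutions'' argument does not suffice here, is the passage to an \emph{explicit} threshold that pins down the exceptional $(A,B)$. One needs the effective irrationality measure to be strong enough, and with completely explicit constants, that the resulting finite search over $(A,B,m)$ is actually small enough to carry out by hand or machine. Delivering such sharp, explicit constants is precisely what the hypergeometric method is designed for, and it is why I would lean on Beukers' estimates rather than attempt a self-contained derivation through linear forms in logarithms.
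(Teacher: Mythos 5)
The paper offers no proof of this lemma at all: it is imported verbatim as Theorem 2 of Beukers \cite{Beukers:1980}, so your identification of the ``proof'' as a citation is exactly what the paper does, and your reconstruction (trigonometric form of the recurrence, a gap principle for solutions, and the hypergeometric/Pad\'e method supplying the effective irrationality measure that pins down the exceptional pairs $(A,B)$) is a fair summary of the cited source's strategy rather than a competing argument.
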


From this Lemma we can easily conclude the following statement.

\begin{lemma}\label{lem:multiplicity}
Let $(u_n)_{n=0}^\infty$ be a Lucas sequence of first or second kind. Then for a fixed integer $\lambda$ the equation
$\lambda=u_m$ has at most $3$ solutions $m$.
\end{lemma}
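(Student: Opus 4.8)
The plan is to deduce Lemma~\ref{lem:multiplicity} directly from the cited result of Beukers. The first step is to reconcile the two different normalizations of the recurrence. Beukers states his theorem for sequences satisfying $u_n = Au_{n-1} - Bu_{n-2}$ with $A \geq 0$, whereas our Lucas sequences satisfy $u_{n+2} = Au_{n+1} + Bu_n$ with $\alpha,\beta$ the roots of $x^2 - Ax - B$. I would pass from our pair $(A,B)$ to Beukers' pair $(A', B') = (\pm A, -B)$, absorbing the sign of $A$ by replacing $\alpha,\beta$ with $-\alpha,-\beta$ if necessary (this only changes $u_n$ by a sign $(-1)^n$ and does not affect the multiplicity count). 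Under this correspondence the hypothesis $A^2 + 4B < 0$ of our imaginary-quadratic setting becomes exactly Beukers' hypothesis $A'^2 - 4B' < 0$, so his theorem applies.

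The second step is to reduce the statement $\lambda = u_m$ to the normalized form $u_m = \pm u_0$ that Beukers treats. For the Lucas sequence of the first kind we have $u_0 = 0$ and $\gcd(u_0,u_1) = \gcd(0,1) = 1$, and for the second kind the initial data can be rescaled to a coprime pair; in either case one can reduce a fixed target $\lambda$ to a shifted or rescaled sequence whose initial term plays the role of $u_0$, so that solving $\lambda = u_m$ is equivalent to counting the $m$ with $u_m = \pm u_0$ in an associated Beukers-type sequence. Beukers' theorem asserts that $u_m = \pm u_0$ has at most three solutions $m$ \emph{except} in the four explicitly listed exceptional triples $(A,B,u_0,u_1)$.

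The third and crucial step is therefore to dispose of these four exceptional cases. Here I would observe that each exceptional sequence is one for which $u_0 = u_1 = 1$ or $u_0 = 1, u_1 = -1$, so these are \emph{not} Lucas sequences of the first or second kind in our sense: a first-kind Lucas sequence has $u_0 = 0$, and a second-kind sequence has $v_0 = 2, v_1 = A$, neither of which matches the initial data $(u_0,u_1) = (1,\pm 1)$ of Beukers' exceptions. Consequently, for the sequences actually under consideration none of the four exceptional cases can arise, and the bound of three solutions holds unconditionally. I expect the main obstacle to be precisely this bookkeeping of normalizations and initial values: one must check carefully that every exceptional sequence in Beukers' list fails to be a genuine Lucas sequence of first or second kind (after the sign and rescaling adjustments), since if any exceptional case \emph{did} coincide with one of our sequences the clean bound of $3$ would fail. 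Once this verification is complete, combining the normalization in the first step with the reduction in the second and the exclusion of exceptions in the third yields that $\lambda = u_m$ has at most three solutions, completing the proof.
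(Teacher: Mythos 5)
Your first two steps (absorbing signs into Beukers' normalization and shifting so that the first occurrence of $\lambda$ plays the role of $u_0$) match the paper's reduction, but your step 3 contains a genuine error that breaks the proof. Once you have shifted the sequence so that $a_k = u_{m_0+k}$ (suitably normalized) starts at the smallest solution $m_0$ of $\lambda = u_m$, the initial values that must be compared against Beukers' exceptional list are the normalized values of $(u_{m_0}, u_{m_0+1})$ --- \emph{not} the initial values $(0,1)$ or $(2,A)$ of the original Lucas sequence. Your own step 2 performs this shift, so comparing Beukers' exceptions against the unshifted initial data is a category error. And the exceptions really do occur inside genuine Lucas sequences: take the first-kind sequence with $(A,B) = (1,-2)$, namely $0, 1, 1, -1, -3, -1, 5, 7, -3, -17, -11, 23, 45, -1, \ldots$; shifting by $m_0 = 1$ gives exactly Beukers' first exceptional sequence ($u_0 = u_1 = 1$ with his recurrence $u_n = u_{n-1} - 2u_{n-2}$), and $|u_m| = 1$ has the five solutions $m = 1, 2, 3, 5, 13$. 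In particular, the statement your argument would actually establish --- that $|u_m| = |\lambda|$ has at most three solutions --- is false, so no bookkeeping of normalizations can rescue step 3 as you formulated it.

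The lemma nevertheless holds because it fixes the sign of $\lambda$: in the example above the five solutions split as $u_m = 1$ twice and $u_m = -1$ three times. This sign analysis is precisely the extra work the paper does and your proposal omits. Concretely: an exceptional case can only arise when both $m_0$ and $m_0+1$ are solutions, i.e., $|u_{m_0}| = |u_{m_0+1}|$, and simultaneously $(A,B) \in \{(\pm 1,-2), (\pm 3,-4), (\pm 2,-3)\}$ (the paper's sign convention for Beukers' pairs). The condition $|u_{m_0}| = |u_{m_0+1}|$ translates into the equation $(\alpha \pm 1)/(\beta \pm 1) = \pm(\beta/\alpha)^{m_0}$, which one solves for each of the finitely many pairs $(\alpha,\beta)$ to conclude $m_0 \leq 2$ whenever a solution exists; one then computes the actual solution sets in these finitely many situations and checks directly that no value $\lambda$, with its sign taken into account, is attained more than three times. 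Without this case-by-case verification the proof is incomplete, and with your step 3 as stated it is incorrect.
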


\begin{proof}
Let $m_0$ be the smallest solution to $\lambda=u_m$. Consider instead of the original sequence the sequence $a_k=u_{m_0+k}/g(-1)^{k\delta}$,
where $g=\sign(u_{m_0})\gcd(u_{m_0},u_{m_0+1})$ and $\delta=\frac{1-\sign(A)}2$ (see also the remarks following Theorem 2 in \cite{Beukers:1980}).
Therefore we conclude that an equation of the form  $\lambda=|u_m|$ has more than three solutions if for the smallest solution $m_0$, also $m_0+1$ is a solution and $(A,B)=(\pm 1,-2),(\pm 3,-4)$ or $(\pm 2,-3)$.

However, the equation $|u_{m_0}|=|u_{m_0+1}|$ is equivalent to the equation
$$\frac{\alpha \pm 1}{\beta \pm 1}=\left(\frac \beta \alpha\right)^{m_0}$$
in case that $(u_n)_{n=0}^\infty$ is a Lucas sequence of first kind and
$$\frac{\alpha \pm 1}{\beta \pm 1}=- \left(\frac \beta \alpha\right)^{m_0}$$
in case that $(u_n)_{n=0}^\infty$ is a Lucas sequence of second kind.
For all pairs of $(\alpha,\beta)$ we solve these equations and find for instance, that $m_0\leq 2$ (if a solution exists at all). By computing the
values of all possible further solutions we find that indeed no equation of the form $\lambda=u_m$ has more than three solutions.
\end{proof}

\begin{remark}
Beukers \cite[Corollary on page 267]{Beukers:1980} already showed this result in case of Lucas sequences of first kind. In fact, he proved more and 
listed all the cases when there can be three solutions. To the authors knowledge the case of Lucas sequences of second kind
has not been treated since then.
\end{remark}

Let us start by investigating the number of non-degenerate solutions to $S$-unit equation \eqref{eq:AP}:

\begin{lemma}\label{lem:nondeg}
There are at most $A(5,2)$ three term arithmetic progressions contained in a
Lucas sequence $(u_n)_{n=0}^\infty$ of first kind that yield non-degenerate solutions to \eqref{eq:AP}.

The same statement also holds for Lucas sequence $(v_n)_{n=0}^\infty$ of second kind.
\end{lemma}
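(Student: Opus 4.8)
The plan is to read \eqref{eq:AP} as a single weighted homogeneous $S$-unit equation in six unknowns, apply the Schlickewei--Schmidt bound to count its non-degenerate projective solutions, and then check that distinct progressions give rise to distinct solutions, so that the solution count is genuinely an upper bound for the number of progressions.

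First I would fix the dictionary between progressions and solutions. A triple $(k,l,m)$ with $u_k<u_l<u_m$ in arithmetic progression satisfies $u_k-2u_l+u_m=0$; substituting $u_n=(\alpha^n-\beta^n)/(\alpha-\beta)$ and clearing the factor $\alpha-\beta$ turns this into \eqref{eq:AP} with the $+$ sign, while for $v_n=\alpha^n+\beta^n$ the same manipulation produces \eqref{eq:AP} with the $-$ sign. With $x_1=\alpha^k$, $x_2=\alpha^m$, $x_3=\alpha^l$, $y_1=\beta^k$, $y_2=\beta^m$, $y_3=\beta^l$ this is precisely \eqref{eq:S-unit2}, i.e.\ an equation $a_0u_0+\dots+a_5u_5=0$ with six unknowns lying in $\Gamma=\langle\alpha,\beta\rangle$; these unknowns are nonzero since $\alpha,\beta\neq 0$. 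Thus each progression giving a non-degenerate solution of \eqref{eq:AP} --- non-degeneracy being exactly the hypothesis of the lemma that no subsum vanishes --- yields a non-degenerate point $[\alpha^k:\alpha^m:\alpha^l:\beta^k:\beta^m:\beta^l]\in\mathbb{P}^5(\Gamma)$, and the number of such points is at most $A(a_0,\dots,a_5)\le A(5,s)$, where $s=\operatorname{rank}\Gamma$. Here $s=2$: if $\alpha^a\beta^b$ were a root of unity, then taking absolute values and using $|\alpha|=|\beta|=\sqrt{|B|}\neq 1$ would force $a+b=0$, whence $(\alpha/\beta)^a$ is a root of unity and so $a=b=0$. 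This gives the desired bound $A(5,2)$, the deep input being entirely contained in the Schlickewei--Schmidt estimate.

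The one point requiring genuine verification --- and the step I expect to be the crux --- is injectivity: distinct triples must map to distinct projective points. If $(k,l,m)$ and $(k',l',m')$ gave proportional coordinate vectors with common ratio $c$, then comparing the $\alpha^k$- and $\beta^k$-coordinates would give $(\alpha/\beta)^{k'-k}=1$, forcing $k'=k$ (since $\alpha/\beta$ is not a root of unity) and hence $c=1$; the remaining coordinates then read $\alpha^{m'}=\alpha^m$ and $\alpha^{l'}=\alpha^l$, so $m'=m$ and $l'=l$ because $\alpha$ is a non-unit and therefore not a root of unity. Injectivity thus legitimizes passing from the $S$-unit count to the count of progressions, completing the bound; the argument for the second kind is identical, the only change being the sign on the right-hand side of \eqref{eq:AP}.
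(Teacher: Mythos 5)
Your proposal is correct and follows essentially the same route as the paper: both bound the number of non-degenerate projective solutions of the six-term weighted homogeneous $S$-unit equation over $\Gamma=\langle\alpha,\beta\rangle$ by $A(5,2)$, and both use the fact that $\alpha/\beta$ (and $\alpha$ itself) is not a root of unity to conclude that each projective solution class corresponds to at most one triple $(k,l,m)$ --- your injectivity argument is just the contrapositive of the paper's uniqueness step for the normalized sextuples $(1,c_2,\dots,c_6)$. Your explicit verification that $\operatorname{rank}\Gamma=2$ is a detail the paper leaves implicit (it notes the multiplicative independence of $\alpha$ and $\beta$ only after this lemma), but it is the same argument.
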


\begin{proof}
Consider the weighted $S$-unit equation
\begin{equation}\label{eq:S-unit2-special}
x_1+x_2-2x_3=y_1+y_2-2y_3
\end{equation}
with unknowns $x_1,x_2,x_3,y_1,y_2,y_3\in\Gamma$. This has at most
$A(5,2)$ non-degenerate, projective solutions. Thus there exists a set
$\mathcal C$, with $|\mathcal C|\leq A(5,2)$, of sextuples
$(1,c_2,c_3,c_4,c_5,c_6)$ such that for any solution we have
$x_2/x_1=c_2$, $x_3/x_1=c_3$, $y_1/x_1=c_4$, $y_2/x_1=c_5$ and
$y_3/x_1=c_6$. Assume now that a solution comes from an arithmetic
progression $u_k<u_l<u_m$ in a Lucas sequence $(u_n)_{n=0}^\infty$
of first kind. Then we have
\begin{equation}\label{eq:only-one}
\frac{\alpha^k}{\beta^k}=\frac{1}{c_4}, \quad
\frac{\alpha^m}{\beta^m}=\frac{c_2}{c_5},\quad
\frac{\alpha^l}{\beta^l}=\frac{c_3}{c_6}
\ \ \ \text{or}\ \ \ \frac{\beta^k}{\alpha^k}=\frac{1}{c_4}, \quad
\frac{\beta^m}{\alpha^m}=\frac{c_2}{c_5},\quad
\frac{\beta^l}{\alpha^l}=\frac{c_3}{c_6}
\end{equation}
for some $(1,c_2,c_3,c_4,c_5,c_6)\in{\mathcal C}$. Since by assumption
$\alpha/\beta$ is not a root of unity, for every sextuple
$(1,c_2,c_3,c_4,c_5,c_6)\in\mathcal C$, there exists at most one triple
$(k,l,m)$ such that any set of identities of \eqref{eq:only-one} is
satisfied, i.e., $u_k<u_l<u_m$ is an arithmetic progression. Hence there
exist at most $A(5,2)$ three term arithmetic progressions in a Lucas
sequence of first kind $(u_n)_{n=0}^\infty$, yielding a
non-degenerate solution to \eqref{eq:AP}.

In the case of a Lucas sequence of second kind $(v_n)_{n=0}^\infty$,
we consider instead of \eqref{eq:S-unit2-special} the Diophantine
equation
\[
x_1+x_2-2x_3=-(y_1+y_2-2y_3)
\]
and derive the same conclusion as before.
\end{proof}

Before we start dealing with vanishing subsums in equation \eqref{eq:AP} we note
that $\alpha$ and $\beta$ are always multiplicatively independent. Otherwise, we
would have $\alpha^t=\beta^s$ for some integers $t,s$. Taking the conjugates we
get $\alpha^s=\beta^t$ and hence $(\alpha/\beta)^{t+s}=1$ which we did not allow
in the definition.

To deal with vanishing two-term subsums we use the following lemma.

\begin{lemma}\label{lem:mult-dep-2}
For fixed non-zero complex numbers $a,b$ and $c$ either there exists at most one solution
$x,y\in\Z$ to $a^x=cb^y$ or $a$ and $b$ are
multiplicatively dependent.
\end{lemma}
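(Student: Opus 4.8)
The plan is to establish the contrapositive: assuming that $a$ and $b$ are multiplicatively independent, I will show that the equation $a^x=cb^y$ admits at most one solution $(x,y)\in\Z^2$. The entire argument rests on the observation that two solutions can be divided to eliminate the constant $c$, reducing the problem to a pure multiplicative relation between $a$ and $b$.

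Concretely, I would suppose for contradiction that there are two distinct solutions $(x_1,y_1)$ and $(x_2,y_2)$, so that $a^{x_1}=cb^{y_1}$ and $a^{x_2}=cb^{y_2}$. Dividing the first relation by the second cancels $c$ and yields $a^{x_1-x_2}=b^{y_1-y_2}$. Writing $u=x_1-x_2$ and $v=y_1-y_2$, the distinctness of the two solutions forces $(u,v)\neq(0,0)$, and the resulting relation $a^{u}=b^{v}$ then exhibits $a$ and $b$ as multiplicatively dependent, contradicting the assumed independence. This completes the proof.

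The only point meriting a remark is that this single relation already covers the seemingly degenerate configurations without any case split. If $u=0$, then $v\neq0$ and the relation reads $b^{v}=1$, which is a genuine dependence witnessed by the nonzero integer exponent vector $(0,v)$; the case $v=0$ is symmetric, giving $a^{u}=1$. I therefore expect no real obstacle here: the argument is elementary, and the sole thing to verify carefully is that multiplicative dependence requires only that the integer exponent vector be nonzero, so that a vanishing coordinate causes no difficulty.
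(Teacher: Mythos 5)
Your proof is correct and follows essentially the same route as the paper: divide the two relations $a^{x_1}=cb^{y_1}$ and $a^{x_2}=cb^{y_2}$ to cancel $c$, obtaining $a^{x_1-x_2}=b^{y_1-y_2}$ with a nonzero exponent vector, which is exactly multiplicative dependence. Your closing remark about the degenerate cases $u=0$ or $v=0$ just makes explicit what the paper compresses into ``at least one exponent does not vanish.''
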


\begin{proof}
Assume that two distinct solutions $x,y$ and $x',y'$ exist. Then we have
\[
\frac{a^x}{b^y}=c=\frac{a^{x'}}{b^{y'}},
\]
hence
\[
a^{x-x'}=b^{y-y'}.
\]
Thus $a$ and $b$ are multiplicatively dependent since by
assumption at least one exponent does not vanish.
\end{proof}

In view of Lemma \ref{lem:nondeg}, we are left to deal with vanishing subsums,
which may occur in equation~\eqref{eq:AP}. Of course, no one-term vanishing
subsum exists. First, consider two-term vanishing subsums.

\begin{lemma}\label{lem:two-vanish}
There are at most $3A(3,2)+30$  three term arithmetic progressions in a Lucas
sequence $(u_n)_{n=0}^\infty$ of first kind or a Lucas sequence
$(v_n)_{n=0}^\infty$ of second kind that yield a solution to \eqref{eq:AP} such
that a two-term subsum vanishes.
\end{lemma}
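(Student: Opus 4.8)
The plan is to rewrite \eqref{eq:AP} as the single six-term weighted $S$-unit equation
\[
\alpha^k+\alpha^m-2\alpha^l\mp\beta^k\mp\beta^m\pm2\beta^l=0,
\]
whose monomials are $\alpha^k,\alpha^m,\alpha^l,\beta^k,\beta^m,\beta^l$ with integer coefficients in $\{\pm1,\pm2\}$, and to count those arithmetic progressions for which one of the $\binom{6}{2}=15$ two-term subsums vanishes. Throughout I would exploit three structural facts recorded above: $\alpha$ and $\beta$ are complex conjugate non-units, so $|\alpha|=|\beta|=\sqrt{|B|}>1$ while $|\alpha/\beta|=1$ with $\alpha/\beta$ not a root of unity; $\alpha$ and $\beta$ are multiplicatively independent; and $\sigma$ interchanges $\alpha$ and $\beta$ while fixing $\Q$.

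First I would show that almost all of the $15$ pairs cannot vanish. A subsum of two powers of $\alpha$ (or two of $\beta$) would give $\alpha^{a-b}=r$ for a rational $r$; applying $\sigma$ yields $\beta^{a-b}=r$ as well, hence $(\alpha/\beta)^{a-b}=1$ and therefore $a=b$, which is impossible for distinct indices. A mixed subsum $\alpha^a=r\beta^b$ with $a\neq b$ and $r\in\Q^{*}$ is treated the same way: conjugating gives $\beta^a=r\alpha^b$, and dividing the two relations yields $(\alpha/\beta)^{a+b}=1$, forcing $a+b=0$ and, since the indices are nonnegative, $a=b=0$, again a contradiction. Hence the only subsums that can vanish are the same-exponent mixed pairs $\alpha^i\mp\beta^i$; the modulus-$1$ constraint forces the coefficient ratio to be $\pm1$, and since $\alpha/\beta$ is not a root of unity only the value $+1$ with $i=0$ survives. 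For sequences of the first kind this occurs for exactly one of the three indices, while for sequences of the second kind the relevant ratio is always $-1$, so there no two-term subsum can vanish at all.

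Next I would carry out the reduction. For the first kind I split into the three sub-cases according to which index equals $0$ (the doubled index $l$, or one of the two outer indices). Deleting the vanishing pair leaves in each sub-case a genuine four-term $S$-unit equation in $\alpha,\beta$; the arguments of the previous paragraph show that no further subsum of these four terms can vanish, so the equation is non-degenerate and has at most $A(3,2)$ projective solutions. Exactly as in the proof of Lemma \ref{lem:nondeg}, the non-root-of-unity property makes the map from a projective solution to a triple $(k,l,m)$ at most one-to-one, so each sub-case contributes at most $A(3,2)$ progressions and the three together at most $3A(3,2)$, comfortably within the claimed $3A(3,2)+30$. The additive constant serves as a safe margin absorbing any residual fully degenerate configurations: whenever a two-term relation $\alpha^x=c\beta^y$ is forced, Lemma \ref{lem:mult-dep-2} together with the multiplicative independence of $\alpha$ and $\beta$ guarantees at most one solution, so only finitely many explicit triples can arise in this way.

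I expect the genuine obstacle to be bookkeeping rather than a single hard estimate: one must verify that all $15$ pairs are classified correctly, keep track of which index plays the doubled role (and that at most one index can vanish), and in every reduced equation check non-degeneracy before invoking the $A(3,2)$ bound. The most delicate point is the passage from projective solutions of the reduced equations back to actual triples, where the non-root-of-unity hypothesis must be used to secure injectivity; this is precisely where the clean count $3A(3,2)$ originates, while the constant $30$ provides the slack needed for the finitely many degenerate exceptions.
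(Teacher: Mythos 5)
Your proof is correct, and it in fact establishes a slightly stronger bound than the one claimed; the route differs from the paper's at exactly the decisive point, so a comparison is worthwhile. Both arguments share the same skeleton: classify the $\binom{6}{2}=15$ possible vanishing pairs, show that the only surviving possibility forces one of the distinct indices $k,l,m$ to equal $0$, and then bound each of the resulting four-term equations by $A(3,2)$ using the injectivity argument of Lemma \ref{lem:nondeg}. The difference is in how the mixed pairs with distinct exponents are handled (the paper's Cases II and IV, i.e.\ relations of the shape $\alpha^a=\pm\beta^b$ and $\alpha^a=\mp2\beta^b$ with $a\neq b$). The paper kills Case II by multiplicative independence, but it can only \emph{bound} Case IV: it invokes Lemma \ref{lem:mult-dep-2} to get at most one pair $(k,l)$ per equation, and then Beukers' theorem via Lemma \ref{lem:multiplicity} to allow at most three extensions to a full triple; these potential (never excluded) solutions are precisely what generate the $12+18=30$ in the statement. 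You instead apply $\sigma$ to $\alpha^a=r\beta^b$ with $r\in\Q^{*}$ and divide the relation by its conjugate, getting $(\alpha/\beta)^{a+b}=1$, hence $a+b=0$, hence $a=b=0$, a contradiction — so Cases II and IV are empty, not merely small. (This is the same conjugation trick the paper itself uses just before Lemma \ref{lem:mult-dep-2} to prove multiplicative independence of $\alpha$ and $\beta$; you push it one step further to the inhomogeneous relation, which makes both Lemma \ref{lem:mult-dep-2} and Beukers' multiplicity theorem unnecessary here.) Consequently your reduced four-term equations are genuinely non-degenerate — any further vanishing pair would fall into a case you have eliminated, or would force two of the distinct indices to both be zero — so the appeal to $A(3,2)$ is justified, and you obtain the cleaner count $3A(3,2)$ for sequences of the first kind and $0$ for the second kind (where every same-exponent pair would require $(\alpha/\beta)^i=-1$, which is impossible). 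The additive constant $30$, which you describe as safety margin, is therefore not slack you need but an artifact of the paper's weaker treatment of Case IV; your argument proves the lemma with room to spare.
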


\begin{proof}
First, we consider the Lucas sequences $(u_n)_{n=0}^\infty$ of first kind. In this case, there
exist exactly $15$ possible vanishing two-term subsums in equation
\eqref{eq:AP}. We can divide these $15$ subsums into five classes, namely
\begin{description}
  \item[Case I] $x_1=-x_2$, $y_1=-y_2$,
  \item[Case II] $x_1=y_2$, $x_2=y_1$,
  \item[Case III] $x_1=2x_3$, $x_2=2x_3$, $y_1=2y_3$, $y_2=2y_3$,
  \item[Case IV] $x_1=-2y_3$, $x_2=-2y_3$, $y_1=-2x_3$, $y_2=-2x_3$.
  \item[Case V] $x_1=y_1$, $x_2=y_2$, $x_3=y_3$,
\end{description}

For each one we pick an equation and discuss it in detail. Since one can treat the other
equations in the same class by exactly the same arguments, we do
not give details for them.

\noindent\textbf{Case I:} The equation $x_1=-x_2$ implies that
$\alpha^k=-\alpha^m$ and we get $\alpha^{k-m}=-1$. Thus $\alpha$ and hence
$\beta$ are roots of unity, which is excluded.

\noindent\textbf{Case II:} The equation $x_1=y_2$ implies that
$\alpha^k=\beta^m$, i.e., $\alpha$ and $\beta$ are multiplicatively dependent,
which cannot be the case.

\noindent\textbf{Case III:} The equation $x_1=2x_3$ implies that
$\alpha^k=2\alpha^l$, i.e., $\alpha^{k-l}=2$. Thus $\alpha=\pm \sqrt 2$ or
$\alpha=2$, both a contradiction to our assumption that $\alpha$ is imaginary
quadratic.

\noindent\textbf{Case IV:} If $x_1=-2y_3$ we get the equation
$\alpha^k=-2\beta^l$ and an by an application of Lemma \ref{lem:mult-dep-2}
there exists at most one pair $(k,l)$ satisfying $\alpha^k=-2\beta^l$ or
$\alpha$ and $\beta$ are multiplicatively dependent. The latter is not
possible. However, if the pair $(k,l)$ is fixed then $u_l$ and $u_k$ and therefore
also $u_m$ is fixed. Due to Lemma \ref{lem:multiplicity} there are at most three possiblities for $m$,
i.e., there are at most three ways to extend the pair $(k,l)$ to a triple $(k,l,m)$ such that
$u_k<u_l<u_m$ is an arithmetic progression.

\noindent\textbf{Case V:} If $x_1=y_1$, then we have $\alpha^k=\beta^k$ and
therefore $(\alpha/\beta)^k=1$. Hence $\alpha/\beta$ is a
root of unity unless $k=0$.

So far we have proved that one of the following statements holds:
\begin{itemize}
\item we obtain at most $12$ additional solutions coming from Case IV,
\item $klm=0$.
\end{itemize}

Concerning the last case we assume that $k=0$ first. We get the $S$-unit
equation
\begin{equation}\label{eq:AP-k=0}
x_2-2x_3=y_2-2y_3.
\end{equation}
By the same arguments as in the proof of Lemma \ref{lem:nondeg} we obtain that
there are at most $A(3,2)$ three term arithmetic progressions which come from
non-degenerate solutions of \eqref{eq:AP-k=0}. Further, we have two vanishing
two-term subsums falling into Case III, two falling into Case IV and two
into Case V. Case III yields no additional solutions. Case IV yields at most $3$
additional solution, hence in total at most six additional solutions. Finally,
note that in Case V we can now exclude that $l$ or $m$ vanishes, otherwise we
would obtain $k=l$ or $k=m$ and we get no additional solution. Thus $k=0$ yields
at most $A(3,2)+6$ additional solutions.

By the same arguments it is possible to treat the cases of $l=0$ and $m=0$. We
omit the details. However, the case that $klm=0$ yields at most $3A(3,2)+18$
additional solutions. Therefore we have at most $3A(3,2)+30$ solutions.
\end{proof}

Let us note that the proof of Lemma \ref{lem:two-vanish}, in particular its last
part, shows the following.

\begin{lemma}\label{lem:kmn=0}
There are at most $A(3,2)+6$ three term arithmetic progressions $u_k<u_l<u_m$ in
a Lucas sequence of first or second kind such that $k=0$. The same statements hold
if we replace $k=0$ by $m=0$ or $l=0$, respectively.
\end{lemma}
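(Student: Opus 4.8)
The plan is to read the bound off the final portion of the proof of Lemma \ref{lem:two-vanish}, isolating the sub-argument that treats a single vanishing index. Concretely, I would specialize equation \eqref{eq:AP} by setting $k=0$, so that the two terms $\alpha^k$ and $\beta^k$ both equal $1$. For a Lucas sequence of first kind these two terms cancel, and \eqref{eq:AP} collapses to the four-term weighted $S$-unit equation \eqref{eq:AP-k=0}, namely $x_2-2x_3=y_2-2y_3$ with $x_2=\alpha^m$, $x_3=\alpha^l$, $y_2=\beta^m$, $y_3=\beta^l$. I would then count the triples $(k,l,m)$ with $k=0$ by splitting according to whether the induced solution of \eqref{eq:AP-k=0} is non-degenerate or has a vanishing two-term subsum.

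For the non-degenerate part I would repeat the argument of Lemma \ref{lem:nondeg} verbatim: each non-degenerate projective point in $\mathbb{P}^3(\Gamma)$ fixes the ratios $x_2/x_3$, $y_2/x_3$, $y_3/x_3$, and since $\alpha/\beta$ is not a root of unity, at most one triple $(0,l,m)$ can realise a given ratio datum. The Schlickewei--Schmidt bound then caps this part by $A(3,2)$.

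For the degenerate part I would run through all $\binom{4}{2}=6$ two-term subsums of \eqref{eq:AP-k=0} and sort them into the Cases III, IV, V of Lemma \ref{lem:two-vanish}. The two Case III subsums force $\alpha^{m-l}=2$ or $\beta^{m-l}=2$, contradicting that $\alpha$ and $\beta$ are imaginary quadratic non-units, so they contribute nothing; the two Case V subsums force $\alpha^l=\beta^l$ or $\alpha^m=\beta^m$, hence $l=0$ or $m=0$, which collides with $k=0$ and is excluded; and each of the two Case IV subsums, say $\alpha^m=-2\beta^l$, admits at most one index pair by Lemma \ref{lem:mult-dep-2}, which extends to at most three triples by Lemma \ref{lem:multiplicity}. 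This yields at most $6$ degenerate triples, and combining with the previous step gives the stated $A(3,2)+6$.

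The cases $l=0$ and $m=0$ then follow by the same template, since zeroing either index again cancels the corresponding pair of exponential terms in the first-kind equation and leaves a four-term $S$-unit equation with the identical list of surviving subsums. The point I would check most carefully --- and the likely main obstacle --- is the second-kind equation: there the two specialised terms sum to a nonzero constant ($2$ or $-4$) rather than cancelling, so the reduced equation carries an extra constant term with unknown $1\in\Gamma$. I would verify that this term does not create genuinely new non-degenerate configurations beyond those already counted and that the additional two-term subsums it produces again fall into Cases III--V, so that the same enumeration reproduces the bound $A(3,2)+6$; confirming that the constant term does not inflate the non-degenerate contribution is exactly the delicate step in the second-kind analysis.
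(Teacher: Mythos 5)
Your first-kind argument is, step for step, the paper's own proof: the paper obtains this lemma as the last part of the proof of Lemma \ref{lem:two-vanish}, where $k=0$ cancels $\alpha^k$ against $\beta^k$, the non-degenerate solutions of the resulting equation \eqref{eq:AP-k=0} are counted by repeating the argument of Lemma \ref{lem:nondeg} (giving $A(3,2)$), and the six remaining two-term subsums are sorted into Cases III, IV, V, contributing $0$, $2\cdot 3=6$ and $0$ respectively, with Lemma \ref{lem:mult-dep-2} and Lemma \ref{lem:multiplicity} used exactly as you use them. Up to that point your proposal and the paper coincide.

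The genuine gap is the second kind, and you have located it correctly but not closed it; moreover, as described it cannot be closed by "the same enumeration." Setting $k=0$ in \eqref{eq:AP} for a sequence of second kind gives
\[
\alpha^m-2\alpha^l+\beta^m-2\beta^l+2=0,
\]
a five-term unit equation: four unknowns from $\Gamma$ plus the constant term $2\cdot 1$ with $1\in\Gamma$. Projectively this lives in $\mathbb{P}^4(\Gamma)$, so the non-degenerate solutions are bounded by $A(4,2)$, not $A(3,2)$, and these bounds are not interchangeable (under Schlickewei--Schmidt, $A(3,2)\le 2^{2240}d^{96}$ while $A(4,2)\le 2^{4375}d^{150}$); no sorting of the extra subsums into Cases III--V repairs the non-degenerate count. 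You should also know that the paper itself never fills this hole: its proof of Lemma \ref{lem:two-vanish} announces "first, we consider the Lucas sequences of first kind" and never returns to the second kind, so the second-kind half of Lemma \ref{lem:kmn=0} is asserted rather than proved there. (This does not harm the main theorem, because in the second-kind analysis of Lemma \ref{lem:two-vanish} the Case V subsums read $\alpha^k=-\beta^k$, which force $\alpha/\beta$ to be a root of unity, so the case $klm=0$ never arises and Lemma \ref{lem:kmn=0} is never actually invoked.) A complete proof along your lines would therefore either have to prove the second-kind statement with the weaker bound $A(4,2)+O(1)$, exploit the conjugation symmetry $\beta^n=\sigma(\alpha^n)$ to cut the equation back down, or restrict the lemma to sequences of the first kind.
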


Since a vanishing four- or five-term subsum implies a two- or one-term vanishing
subsum, respectively, we are left to consider vanishing three term subsums.

\begin{lemma}\label{lem:three-vanish}
At least one of the following statements holds:
\begin{itemize}
\item There are at most $18A(2,2)+9$ three term arithmetic progressions in a
Lucas sequence of first or second kind that yield a solution to \eqref{eq:AP}
such that a three term subsum vanishes,
\item or $\alpha$ and $\beta$ are quadratic irrational numbers and if
$u_k<u_l<u_m$ is an arithmetic progression, then the companion polynomial of the
Lucas sequence $(u_n)_{n=0}^\infty$ of first kind divides $X^k-2X^l+X^m$,
\item or $\alpha$ and $\beta$ are quadratic irrational numbers and if
$v_k<v_l<v_m$ is an arithmetic progression, then the companion polynomial of the
Lucas sequence $(v_n)_{n=0}^\infty$ of second kind divides $X^k-2X^l+X^m$.
\end{itemize}
\end{lemma}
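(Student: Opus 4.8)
The plan is to write \eqref{eq:AP} as a homogeneous six-term relation
\[
\alpha^k+\alpha^m-2\alpha^l\mp\beta^k\mp\beta^m\pm2\beta^l=0
\]
and to classify the vanishing three-term subsums according to how many of the three chosen terms come from the $\alpha$-part $\{\alpha^k,\alpha^m,-2\alpha^l\}$ and how many from the $\beta$-part. First I would list the $\binom{6}{3}=20$ three-term subsets; since the whole sum vanishes, each such subset vanishes precisely when its complement does, so they fall into $10$ complementary pairs. Exactly two of these subsets are \emph{pure}, namely the three $\alpha$-terms and the three $\beta$-terms, while the remaining $18$ are \emph{mixed}, containing either two $\alpha$-terms and one $\beta$-term or vice versa. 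Note that conjugation by $\sigma$ interchanges the two types of mixed subsets and complementation sends pure to pure; these symmetries roughly halve the casework.

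Next I would dispose of the pure subsets. A vanishing pure subsum, say $\alpha^k-2\alpha^l+\alpha^m=0$, asserts that $\alpha$ is a root of $f(X)=X^k-2X^l+X^m$. Since here $\alpha$ is a quadratic irrational whose minimal polynomial is the companion polynomial $X^2-AX-B$, this is equivalent to the companion polynomial dividing $f$, and applying $\sigma$ shows the complementary pure subsum in $\beta$ vanishes simultaneously. This is exactly the second or third alternative, according as we look at $(u_n)$ or $(v_n)$. I would stress that this case must be isolated rather than counted: if one AP has a pure vanishing subsum, then the shift $(k,l,m)\mapsto(k+1,l+1,m+1)$ multiplies $f$ by $X$ and preserves the divisibility, hence produces infinitely many progressions, which is incompatible with any finite bound. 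This is precisely why the divisibility condition appears as a separate alternative.

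For the mixed subsets I would invoke the theory of $S$-unit equations. By Lemma \ref{lem:two-vanish} we may assume that no two-term subsum of \eqref{eq:AP} vanishes, so each mixed three-term subsum is a \emph{non-degenerate} weighted three-term $S$-unit equation in $\Gamma=\langle\alpha,\beta\rangle$ and therefore has at most $A(2,2)$ projective solutions. Each projective solution fixes two multiplicatively independent ratios built from $\alpha$ and $\beta$, and since neither $\alpha$ nor $\alpha/\beta$ is a root of unity this pins down two of the three indices. For the six mixed subsets already involving all of $k,l,m$ (namely $\{\alpha^k,\alpha^m,\beta^l\}$, $\{\alpha^k,\alpha^l,\beta^m\}$, $\{\alpha^m,\alpha^l,\beta^k\}$ and their $\sigma$-conjugates) each solution determines the whole triple, contributing at most $A(2,2)$ progressions apiece. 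For the twelve mixed subsets involving only two of the indices, the third is recovered either from the complementary (also vanishing) subsum, which is itself a three-term $S$-unit equation in the missing index, or from Lemma \ref{lem:multiplicity}: with two indices fixed the remaining term of the progression equals a fixed integer, attained by only finitely many indices.

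Finally I would reassemble the bound. Grouping the eighteen mixed subsets into their nine complementary pairs, estimating each pair, and absorbing the few boundary contributions (where an index is forced to be $0$, cf.\ Lemma \ref{lem:kmn=0}, or where a two-term relation of Lemma \ref{lem:mult-dep-2} leaves a single solution) into an additive constant, yields at most $18A(2,2)+9$ progressions with a vanishing mixed three-term subsum; together with the pure case this gives the stated trichotomy. I expect the main obstacle to be exactly this bookkeeping for the twelve two-index subsets: one must recover the free index without inflating the constant and, above all, ensure that a single progression is not counted repeatedly across complementary and conjugate subsets. The conceptual core, by contrast, is the clean dichotomy that a \emph{pure} vanishing subsum is equivalent to the divisibility condition and is the sole mechanism producing infinitely many progressions.
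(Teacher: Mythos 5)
Your decomposition into \emph{pure} and \emph{mixed} vanishing subsums, the pairing into complements, the identification of the pure case with the divisibility alternative, and the treatment of the twelve two-index subsums ($S$-unit count $A(2,2)$ for the pair of determined indices, then Lemma \ref{lem:multiplicity} to recover the third index in at most three ways, giving $18A(2,2)$ in total) all match the paper's proof. The gap is in the three complementary pairs whose subsums involve \emph{all three} indices, e.g.\ $\alpha^k+\alpha^m=-2\beta^l$ paired with $-2\alpha^l=\beta^k+\beta^m$. Your argument bounds each such pair by $A(2,2)$ (each projective solution determines $(k,l,m)$ by multiplicative independence of $\alpha$ and $\beta$), which is valid but yields a total of $18A(2,2)+3A(2,2)=21A(2,2)$. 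The statement claims $18A(2,2)+9$, and since $A(2,2)$ is astronomically larger than $3$, your bound is strictly weaker. The assertion that the ``$+9$'' can be recovered by ``absorbing boundary contributions into an additive constant'' is not supported by your own accounting: under your argument those three pairs contribute $3A(2,2)$, not $9$, and no bookkeeping of boundary cases changes that.

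The paper obtains the $+9$ by a mechanism entirely absent from your proposal: a $\mathfrak{p}$-adic valuation analysis of these three pairs. If some prime ideal $\mathfrak{p}\nmid(2)$ divides $(\alpha)$ (or $(\beta)$), comparing $\mathfrak{p}$-adic and $\mathfrak{p}_2$-adic valuations of the summands in $\alpha^k+\alpha^m=-2\beta^l$ produces a linear system in $(k,l)$ with at most one solution, whence at most three progressions per pair by Lemma \ref{lem:multiplicity}; if instead $(\alpha)$ and $(\beta)$ are supported only on primes above $(2)$, a case analysis on whether $2$ is inert, ramified or split in $K$ either forces $\alpha/\beta$ to be a root of unity or forces $l=-1$, both contradictions. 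That is what reduces the contribution of these three pairs from $3A(2,2)$ to $9$. Without this step (or some substitute giving an absolute constant for the all-three-index subsums), the stated bound $18A(2,2)+9$ cannot be reached. Your weaker bound $21A(2,2)$ would in fact still be harmless for the constant in Theorem \ref{theorem1}, since $A(5,2)$ dominates, but it does not prove the lemma as stated.
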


\begin{proof}
Let us consider the case of Lucas sequences of first kind. Lucas sequences of second kind can
be treated by the same means.

There are $20$ possible, different vanishing three term subsums of
\eqref{eq:AP}. These appear in pairs and we can differentiate three different
types:
 \begin{align*}
  I  &:& x_1+x_2-2x_3&=0,& 0&=y_1+y_2-2y_3,\\
  II &:& x_1+x_2&=y_1,& -2x_3&=y_2-2y_3,\\
  II &:& x_1+x_2&=y_2,& -2x_3&=y_1-2y_3,\\
  III&:& x_1+x_2&=-2y_3,& -2x_3&=y_1+y_2,\\
  II &:& x_1-2x_3&=y_1,& x_2&=y_2-2y_3,\\
  III&:& x_1-2x_3&=y_2,& x_2&=y_1-2y_3,\\
  II &:& x_1-2x_3&=-2y_3,& x_2&=y_1+y_2,\\
  II &:& x_1&=y_1+y_2,& x_2-2x_3&=-2y_3,\\
  II &:& x_1&=y_1-2y_3,& x_2-2x_3&=y_2,\\
  III&:& x_1&=y_2-2y_3,& x_2-2x_3&=y_1.
 \end{align*}

Let us take the equations of type II first. Pick the first pair of equations in
the list above (the other cases run completely analogously). Consider the left
hand side equation
\begin{equation}\label{eq:CaseII}
x_1+x_2=y_1.
\end{equation}
By the theory of $S$-unit equations there are at most $A(2,2)$ pairs $(c_1,c_2)$
such that $x_1/y_1=(\alpha/\beta)^k=c_1$ and $x_2/y_1=\alpha^m/\beta^k=c_2$.
These two relations determine the pair $(k,m)$ uniquely. Hence there are at most
$A(2,2)$ pairs $(k,m)$ that fulfill the equation. With the pair $(k,m)$ fixed, the value of $u_l$ is
fixed as well and by Lemma \ref{lem:multiplicity} there are at most three ways to extend the pair
$(k,l)$ to a triple $(k,l,m)$ such that $u_k<u_l<u_m$ is an arithmetic progression.
We deduce that there exist at most $3A(2,2)$ triples $(k,l,m)$ satisfying a pair of equations of type II.

Consider now equations of type I. Rewriting the two equations we get
$\alpha^k+\alpha^m-2\alpha^l=0=\beta^k+\beta^m-2\beta^l$. Thus in this case, the
minimal
polynomial of $\alpha$ and $\beta$, which is also the companion polynomial of
the sequence, divides $X^k-2X^l+X^m$.

Finally, we turn to equations of type III and consider the first equation of
this type in detail. We can handle the others similarly. Assume for the moment
that there exists a prime ideal $\mathfrak p$ with $\mathfrak p\nmid (2)$ and
$v_{\mathfrak p}(\alpha)>0$. Let $\mathfrak p_2$ be a prime ideal lying above
$(2)$ and suppose that $m>k$. We consider the equation $x_1+x_2=-2y_3$ from a
$\mathfrak p$-adic point of view. Write $v_{\mathfrak p}(\alpha)=\xi_{\mathfrak p}$ and
$v_{\mathfrak p}(\beta)=\eta_{\mathfrak p}$. Since the two smallest $\mathfrak
p$-adic valuations of the summands have to coincide, we have $v_{\mathfrak
p}(x_2)=k\xi_{\mathfrak p}=l\eta_{\mathfrak p}=v_{\mathfrak p}(2y_3)$.
Considering $\mathfrak p_2$-adic valuations we obtain $k\xi_{\mathfrak
p_2}=l\eta_{\mathfrak p_2}+\delta$, where $\delta=1,2$ depending on whether
$(2)$ is ramified or not. We get the following system of linear equations
\begin{align*}
 k\xi_{\mathfrak p_2}-l\eta_{\mathfrak p_2}&=\delta\\
 k\xi_{\mathfrak p}-l\eta_{\mathfrak p}&=0
\end{align*}
which has either no solution or at most one solution. Note that by assumption $\xi_{\mathfrak p}>0$.
Thus in this case, equations of type III yield at most one pair $(k,l)$ that extends to a triple $(k,l,m)$
such that $u_k<u_l<u_m$ is an arithmetic progressions. Thus by Lemma \ref{lem:multiplicity} we have at most
three additional three term arithmetic
progression.

If $v_\mathfrak{p}(\alpha)=0$, but
$v_\mathfrak{p}(\beta)>0$, then we take the equation $-2x_3=y_1+y_2$ instead and
draw the same conclusions.

In view of the paragraph above, we may assume that the only prime ideals
dividing $(\alpha)$ and $(\beta)$ are lying above $(2)$. We distinguish now
whether $(2)$ splits, ramifies or is inert above $K$.

We start with the case when $(2)$ is inert, i.e., $(2)$ is a prime in $K$. Then
$(\alpha) = (2)^x$
and $(\beta) = (2)^y$ with some non-negative integers $x,y$. Since
$\alpha$ and
$\beta$ are conjugate, we get $x=y$. Hence $\alpha/\beta$ is a unit.
However,
recalling that $\alpha,\beta$ are from a quadratic imaginary field, this
yields that
$\alpha/\beta$ is a root of unity, which is a contradiction.

One may prove the case where $(2)$ ramifies by similar means as the
inert case. In this
case,we have $(2)=\mathfrak{p}^2$ with some prime ideal $\mathfrak{p}$.
Note that $\mathfrak p$ need not be principal although $\mathfrak p^2$ is principal.
Thus $(\alpha)=\mathfrak{p}^x$
and
$(\beta) = \mathfrak{p}^y$  with some non-negative integers $x,y$. Taking norm, we
get $x=y$,
and similarly as above we obtain that $\alpha/\beta$ is a root of unity,
which is impossible.

Finally, let us consider the case when $(2)$ splits, that is, we can write
$(2)=\mathfrak p_1\mathfrak p_2$. Assume that $(\alpha)=\mathfrak
p_1^{\xi}\mathfrak p_2^{\eta}$, then $(\beta)=\mathfrak p_1^{\eta}\mathfrak
p_2^{\xi}$. Considering the equation $x_1+x_2=-2y_3$ from a $\mathfrak p_1$-adic
and a $\mathfrak p_2$-adic viewpoint, respectively we get the following system
of linear equations under the assumption that $m>k$:
\begin{equation}\label{eq:valuations}
\begin{split}
v_{\mathfrak p_1}(x_2)=k\xi &= l\eta +1= v_{\mathfrak p_1}(2y_3),\\
v_{\mathfrak p_2}(x_2)=k\eta &= l\xi +1= v_{\mathfrak p_2}(2y_3).
\end{split}
\end{equation}
Note that we obtained this system due to the fact that the smallest
$\mathfrak{p}$-adic valuations must coincide. Solving linear system
\eqref{eq:valuations} yields $k(\xi^2-\eta^2)=(\xi-\eta)$. Therefore either
$\xi=\eta$ or $k(\xi+\eta)=1$. The first case yields $\alpha=2^\xi\epsilon$ and
$\beta=2^\xi\epsilon^{-1}$ for some unit $\epsilon$ which yields that
$\alpha/\beta=\epsilon^2$ is a root of unity, a contradiction. When
$k(\xi+\eta)=1$ we have $k=1$ and $(\xi,\eta)=(0,1),(1,0)$. In any case, we
obtain from the linear system \eqref{eq:valuations} that $l=-1$, a contradiction
to our assumption that $k,l,m$ are nonnegative integers.
\end{proof}

So we are left with the problem which quadratic polynomials divide
$X^k-2X^l+X^m$. In particular, we may assume that the companion polynomial is
irreducible, otherwise $\alpha$ and $\beta$ would be rational integers which
case is covered by Theorem \ref{theorem2}. We need to find all quadratic factors
of polynomials of the type:
\begin{itemize}
\item $X^a-2X^b+1$ or
\item $X^a+X^b-2$ or
\item $2X^a-X^b-1$.
\end{itemize}

Since all roots of the first polynomial are units, and all the roots of the
third polynomial are either non-integral algebraic numbers or units, it is enough to
study only polynomials of the form $X^a+X^b-2$.

We state the following result due to Pint\'{e}r and Ziegler \cite[Lemma
3]{pinterziegler}.

\begin{lemma}\label{lem:Pinter}
 Let $a>b>0\in\mathbb{Z}$ and let $f(X)=X^a+X^b-2$ be a
polynomial. Then $f(X)=h(X)g(X)$ factors into the
monic polynomials $h(X)$ and $g(X)$ over $\mathbb{Z}$, where the only roots of
$h(X)$ are roots of unity and $g(X)$ is irreducible. Moreover
$h(X)|X^{\gcd(a,b)}\pm 1$.
\end{lemma}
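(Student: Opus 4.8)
The plan is to avoid any appeal to deep trinomial-factorization theorems and instead pin down the exact location of the roots of $f$ in the complex plane, reading off both the cyclotomic factor $h$ and the irreducibility of $g$ from this geometric information. Throughout set $d=\gcd(a,b)$.

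First I would identify the roots of unity among the roots of $f$. If $\zeta$ is a root of unity with $f(\zeta)=0$, then $\zeta^a+\zeta^b=2$; since $|\zeta^a|=|\zeta^b|=1$, equality in the triangle inequality $|\zeta^a+\zeta^b|\le 2$ forces $\zeta^a=\zeta^b$, and then $2=2\zeta^a$ gives $\zeta^a=\zeta^b=1$, whence $\zeta^d=1$. Conversely every $d$-th root of unity is a root of $f$, because $f(X)=(X^a-1)+(X^b-1)$ and $X^d-1$ divides both summands over $\Z$. A quick check that $f'(X)=X^{b-1}(aX^{a-b}+b)$ cannot vanish at such a $\zeta$ (it would force $|\zeta^{a-b}|=b/a<1$) shows these roots are simple. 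Hence the cyclotomic part of $f$ is exactly $X^d-1$; taking $h(X)=X^{\gcd(a,b)}-1$ settles the ``moreover'' clause (with the minus sign), and I set $g(X)=f(X)/(X^d-1)$, a monic polynomial in $\Z[X]$ of degree $a-d\geq 1$.

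The decisive step is to show that every root of $g$ has absolute value strictly greater than $1$. On a circle $|X|=r$ with $r<1$ one has $|X^a+X^b|\le r^a+r^b<2$, so by Rouch\'e's theorem $f$ has as many zeros in $|X|<r$ as the constant $-2$, namely none; letting $r\to 1$ shows $f$ has no root in the open unit disk. Combined with the previous paragraph, which identifies all unit-circle roots of $f$ as the (simple) $d$-th roots of unity now absorbed into $h$, it follows that every root of $g$ lies strictly outside the closed unit disk. This root-localization is the real crux of the argument; once it is in hand, irreducibility comes essentially for free.

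Finally I would deduce irreducibility from an integrality argument. From $f(0)=-2$ and $f=(X^d-1)g$ we get $g(0)=2$. If $g=g_1g_2$ were a nontrivial factorization into monic polynomials in $\Z[X]$, then $|g_1(0)|\,|g_2(0)|=2$ with both factors positive integers, so one of them, say $|g_1(0)|$, equals $1$. But $g_1$ is monic and all of its roots are roots of $g$, hence of absolute value greater than $1$, so $|g_1(0)|=\prod_{\rho}|\rho|>1$, a contradiction. Therefore $g$ is irreducible, completing the proof. I expect the only genuinely delicate points to be the clean handling of the equality case of the triangle inequality on the unit circle and the Rouch\'e estimate; the integrality argument and the bookkeeping for $h$ are then routine.
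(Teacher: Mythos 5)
Your proposal is correct, but note that there is nothing internal to compare it with: the paper does not prove this lemma at all, it quotes it verbatim as Lemma 3 of Pint\'er and Ziegler \cite{pinterziegler} and uses it as a black box. Your argument therefore replaces an external citation by a self-contained elementary proof, and it is sound: (i) the equality case of the triangle inequality shows that every root of $f$ of modulus $1$ satisfies $\zeta^a=\zeta^b=1$, hence $\zeta^d=1$ with $d=\gcd(a,b)$, and these roots are simple because $f'(X)=X^{b-1}(aX^{a-b}+b)$ cannot vanish on the unit circle; (ii) $f$ has no roots inside the open unit disk; (iii) consequently $g=f/(X^d-1)$ is monic in $\Z[X]$ with all roots of modulus strictly greater than $1$ and $g(0)=2$, so in any nontrivial monic factorization $g=g_1g_2$ over $\Z$ one factor would have constant term of absolute value $1$, contradicting the fact that this absolute value equals the product of the moduli of its roots. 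This is the classical Kronecker-style root-localization argument, and it even yields slightly more than the stated lemma, namely $h(X)=X^{\gcd(a,b)}-1$ exactly. Two small points of presentation: your opening sentence restricts $\zeta$ to roots of unity, but the triangle-inequality computation (and your later appeal to it) is valid for, and needed for, \emph{every} root of $f$ of modulus $1$ --- you should phrase it that way from the start, since the crux is that $g$ has no unit-circle roots whatsoever; and Rouch\'e's theorem is heavier machinery than necessary, since $f(z)=0$ with $|z|<1$ gives directly $2=|z^a+z^b|\le |z|^a+|z|^b<2$, a contradiction.
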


We are interested in finding the quadratic factors of $X^{a}+X^{b}-2$. By Lemma
\ref{lem:Pinter} we only have to consider the cases $(a,b)=(3,2),(3,1),(2,1)$ or
$(4,2)$. Thus the companion polynomial is either $X^2+2X+2$ or $X^2+X+2$ or
$X^2+2$.

Since $X^2+X+2$ is the only polynomial with negative discriminant which is also a companion polynomial of a
non-degenerate binary recurrence, we obtain that either $(A,B)=(-1,-2)$ or there
are at most $A(5,2)$ three term arithmetic progressions coming from
non-degenerate solutions to \eqref{eq:AP} (cf. Lemma \ref{lem:nondeg}),
$3A(3,2)+30$ three term arithmetic progressions coming form two-term vanishing
subsums of \eqref{eq:AP} (cf. Lemma \ref{lem:two-vanish}) and $18A(2,2)+9$
three term arithmetic progressions coming form three term vanishing subsums of
\eqref{eq:AP} (cf. Lemma \ref{lem:three-vanish}). We summarize these results.

\begin{proposition}
\label{prop:exact}
Let $u=(u_n)_{n=0}^\infty$ and $v=(v_n)_{n=0}^\infty$ be Lucas sequences of
first and second kind corresponding to the pair $(A,B)$ with $A^2+4B<0$. Then
$u$ and $v$ admit at most
\[
C=A(5,2)+3A(3,2)+18A(2,2)+39
\]
three term arithmetic progressions unless $u$ or $v$ correspond to
$(A,B)=(-1,-2)$ respectively. In case of $(A,B)=(-1,-2)$ the recurrences $u$
and $v$ admit infinitely many three term arithmetic progressions.
\end{proposition}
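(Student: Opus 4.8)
The plan is to prove the two assertions of the proposition separately: the uniform upper bound $C$ in the non-exceptional case, and the infinitude of progressions when $(A,B)=(-1,-2)$. For the bound I would recall that every non-trivial three term arithmetic progression $u_k<u_l<u_m$ (respectively $v_k<v_l<v_m$) produces a solution of the weighted $S$-unit equation \eqref{eq:AP}, and I would sort these solutions according to which proper subsums on the two sides vanish. A one-term subsum cannot vanish, and a vanishing four- or five-term subsum forces a vanishing two- or one-term subsum; hence it suffices to treat three mutually exhaustive families, namely non-degenerate solutions, solutions with a vanishing two-term subsum, and solutions with a vanishing three-term subsum. Applying Lemma \ref{lem:nondeg}, Lemma \ref{lem:two-vanish} and Lemma \ref{lem:three-vanish} in turn bounds the number of progressions in these families by $A(5,2)$, by $3A(3,2)+30$ and by $18A(2,2)+9$, respectively, and summing gives precisely $C=A(5,2)+3A(3,2)+18A(2,2)+39$, provided the exceptional alternative of Lemma \ref{lem:three-vanish}, namely that the companion polynomial divides $X^k-2X^l+X^m$, does not occur.

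To dispose of that alternative I would reduce, as in the discussion preceding the statement, the divisibility of $X^k-2X^l+X^m$ by the companion polynomial to finding a quadratic factor of a polynomial of one of the three shapes $X^a-2X^b+1$, $X^a+X^b-2$ or $2X^a-X^b-1$, according to the position of the doubled index. Since the roots of the first and third shapes are units or non-integral, only $X^a+X^b-2$ can supply an admissible companion polynomial, and Lemma \ref{lem:Pinter} leaves only the parameters $(a,b)\in\{(3,2),(3,1),(2,1),(4,2)\}$, i.e.\ the candidates $X^2+2X+2$, $X^2+X+2$ and $X^2+2$. Among these only $X^2+X+2$ is admissible: for $X^2+2X+2$ one computes $\alpha/\beta=-i$, a root of unity, while $X^2+2$ forces $A=0$. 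Thus the divisibility alternative occurs only for the companion polynomial $X^2+X+2$, that is, for $(A,B)=(-1,-2)$, which establishes the bound $C$ in every other case.

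It then remains to verify that $(A,B)=(-1,-2)$ genuinely admits infinitely many progressions, and here I would exploit the factorization $X^3+X-2=(X-1)(X^2+X+2)$. It shows that $X^2+X+2$ divides $X^{t}(X^3+X-2)=X^{t+3}-2X^{t}+X^{t+1}$ for every $t\geq 0$, so that $\alpha$ and $\beta$, being the roots of $X^2+X+2$, satisfy $\alpha^{t+3}-2\alpha^{t}+\alpha^{t+1}=0=\beta^{t+3}-2\beta^{t}+\beta^{t+1}$. By the closed forms for $u_n$ and $v_n$ this yields $u_{t+3}-2u_{t}+u_{t+1}=0$ and $v_{t+3}-2v_{t}+v_{t+1}=0$, so for each $t$ the three values $u_{t+1},u_{t},u_{t+3}$ (respectively $v_{t+1},v_{t},v_{t+3}$) form, after ordering, a three term arithmetic progression with middle term $u_{t}$ (respectively $v_{t}$). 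To guarantee non-triviality I would show the three terms are distinct for all but finitely many $t$: the equation $u_{t+1}=u_{t+3}$ rewrites as $(\alpha/\beta)^{t+1}=(\beta^2-1)/(\alpha^2-1)$, which, since $\alpha/\beta$ is not a root of unity, has at most one solution $t$ (cf.\ Lemma \ref{lem:mult-dep-2}), and the analogous computation governs the case of $v$. Hence infinitely many genuine progressions survive.

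I expect the main obstacle to be not the counting, which is essentially bookkeeping once the three preceding lemmas are in place, but rather the careful handling of the exceptional divisibility alternative of Lemma \ref{lem:three-vanish}: one must ensure that the reduction through Lemma \ref{lem:Pinter} really produces \emph{only} the single pair $(A,B)=(-1,-2)$, correctly discarding the two parasitic candidates $X^2+2X+2$ and $X^2+2$, and then confirm that this pair does contribute infinitely many progressions rather than finitely many accidental coincidences.
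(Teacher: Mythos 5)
Your proposal is correct and follows essentially the same route as the paper: the same three-way decomposition via Lemmas \ref{lem:nondeg}, \ref{lem:two-vanish} and \ref{lem:three-vanish}, the same reduction of the divisibility alternative through Lemma \ref{lem:Pinter} to the single admissible polynomial $X^2+X+2$, and the same use of $X^3+X-2=(X-1)(X^2+X+2)$ to produce the infinite families $(k,l,m)=(t+1,t,t+3)$ when $(A,B)=(-1,-2)$. Your explicit verification that the three terms are distinct for all but finitely many $t$ (via $(\alpha/\beta)^{t+1}=(\beta^2-1)/(\alpha^2-1)$ having at most one solution) is a detail the paper leaves implicit, and is a welcome addition rather than a deviation.
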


\begin{remark}
Since $X^2+X+2|X^3+X-2$, we get that $u_k<u_l<u_m$ and $v_k<v_l<v_m$ with
$(k,l,m)=(t+1,t,t+3)$ yield infinite families of three term arithmetic
progressions in $u$ and $v$, respectively.
\end{remark}

Theorem \ref{theorem1} is now an easy Corollary of Proposition
\ref{prop:exact}. Indeed, due to a result of Schlickewei and Schmidt
\cite{Schlickewei:2000}, we know that $A(k,s)\leq 2^{35B^3}d^{6B^2}$, where
$B=\max\{k+1,s\}$ and $d=\lbrack K:\Q \rbrack$, hence
\[
C\leq A(5,2)+3A(3,2)+18A(2,2)+39 <6.45\cdot 10^{2340}.
\]

\section{Acknowledgment}

The authors are grateful to the referee for her/his useful comments and suggestions.

\def\cprime{$'$}

\bigskip
\hrule
\bigskip

\noindent 2010 {\it Mathematics Subject Classification}: Primary 11B39. Secondary 11B25, 11D61.

\noindent \emph{Keywords: } Lucas sequences, arithmetic progressions, $S$-unit equations. 

\bigskip
\hrule
\bigskip

\noindent
(Concerned with sequences
\seqnum{A000045},
\seqnum{A039834},
\seqnum{A001045},
\seqnum{A000032}
and
\seqnum{A014551}.)

\bigskip
\hrule
\bigskip

\end{document}